\numberwithin{equation}{section}
\def\Title    {Reconstruction of function fields}
\def\Author   {Fedor Bogomolov and Yuri Tschinkel}
\def\Subject  {Algebraic geometry}
\def\Keywords {Galois groups, function fields}
\theoremstyle{plain}
\newtheorem{prop}[subsection]{Proposition}
\newtheorem{Theo}[section]{Theorem}
\newtheorem{thm}[subsection]{Theorem}
\newtheorem{coro}[subsection]{Corollary}
\newtheorem{lemm}[subsection]{Lemma}
\newtheorem{defn}[subsection]{Definition}
\theoremstyle{definition}
\theoremstyle{remark}
\newtheorem{exam}[subsection]{Example}
\newtheorem{nota}[subsection]{Notation}
\def\G{{\mathcal G}}   
\def\D{{\mathcal D}}   
\def\I{{\mathcal I}}   
\def\pic{{\varphi}}
\def\Ga{{\Gamma}}
\def\Val{{\mathcal V}}
\def\DVal{{\mathcal D}{\mathcal V}}
\def\KK{{\boldsymbol{K}}}
\def\LL{{\boldsymbol{L}}}
\def\Alb{{\rm Alb}}
\def\supp{{\rm supp}}
\def\res{{\rm res}}
\def\no{\noindent}
\def\rk{{\rm rk}}
\newcommand{\trdeg}{{\rm tr}\, {\rm deg}}
\newcommand{\ovl}{\overline}
\def\cH{{\mathcal H}}
\def\cT{{\mathcal T}}
\def\dv{{\rm div}}
\newcommand{\Hom}{{\rm Hom}}
\newcommand{\Ker}{{\rm Ker}}
\newcommand{\fibr}{{\rm fibr}}
\def\lra{\longrightarrow}
\def\ra{\rightarrow}
\def\F{{\mathbb F}}
\def\P{{\mathbb P}}
\def\Q{{\mathbb Q}}
\def\Z{{\mathbb Z}}
\def\N{{\mathbb N}}
\def\mo{{\mathfrak o}}
\def\mm{{\mathfrak m}}
\def\Pic{{\rm Pic}}
\def\Div{{\rm Div}}
\def\pr{{\rm pr}}
\author{Fedor Bogomolov}
\address{Courant Institute of Mathematical Sciences, N.Y.U. \\
 251 Mercer str. \\
 New York, NY 10012, U.S.A.}
\email{bogomolo@cims.nyu.edu}
\author{Yuri Tschinkel}
\address{Courant Institute of Mathematical Sciences, N.Y.U. \\
 251 Mercer str. \\
 New York, NY 10012, U.S.A.}
\email{tschinkel@cims.nyu.edu}        
\keywords{Galois groups, function fields}
\title[Function fields]{Reconstruction of higher-dimensional 
function fields}
\begin{document}

\begin{abstract}
We determine the function fields of varieties of dimension $\ge 2$ 
defined over the algebraic closure of $\mathbb{F}_p$, modulo purely 
inseparable extensions, from the quotient by the second term 
in the lower central series of their pro-$\ell$ Galois groups. 
\end{abstract}
\date{\today}

\maketitle
\tableofcontents

\setcounter{section}{0}       
\section*{Introduction}
\label{sect:introduction}

Fix two distinct primes $p$ and $\ell$. 
Let $k=\ovl{\F}_p$ be an algebraic closure of
the finite field $\F_p$. 
Let $X$ be an algebraic variety defined
over $k$ and $K=k(X)$ its function field.
We will refer to $X$ as a {\em model} of $K$;
we will generally assume that $X$ is normal and projective.  
Let $\G^a_K$ be the abelianization of 
the pro-$\ell$-quotient $\G_K$ of the
absolute Galois group of $K$. 
Under our assumptions on $k$, $\G^a_K$ 
is a torsion-free $\Z_{\ell}$-module.  
Let  $\G^c_K$ be its canonical
central extension - the second lower 
central series quotient of $\G_K$.
It determines a set $\Sigma_K$ of
distinguished (primitive) finite-rank subgroups: 
a topologically noncyclic subgroup $\sigma\in \Sigma_K$ iff
\begin{itemize}
\item $\sigma$ lifts to an abelian subgroup of $\G^c_K$;
\item $\sigma$ is maximal:
there are no abelian subgroups $\sigma'\subset \G^a_K$
which lift to an abelian subgroup of $\G^c_K$
and contain $\sigma$ as a proper subgroup.
\end{itemize}
Our main theorem is

\begin{Theo}                                     
\label{thm:main}
Let $K$ and $L$ be function fields over
algebraic closures of finite fields $k$, resp. $l$,  
of characteristic $\neq \ell$. 
Assume that the transcendence degree of $K$ over $k$ is at least two and  
that there exists an isomorphism
\begin{equation}
\label{eqn:psi-dual}
\Psi=\Psi_{K,L}\,:\, \G^a_K\stackrel{\sim}{\lra} \G^a_{L}
\end{equation}
of abelian pro-$\ell$-groups inducing a bijection of sets
$$
\Sigma_K = \Sigma_{L}.
$$
Then $k=l$ and there exists a constant $\epsilon\in \Z_{\ell}^*$
such that $\epsilon^{-1}\cdot\Psi$ is induced from 
a unique isomorphism of perfect closures
$$
\bar{\Psi}^*\,:\, \bar{L}\stackrel{\sim}{\lra} \bar{K}.
$$
\end{Theo}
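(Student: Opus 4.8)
The plan is to translate the hypothesis into a statement about the multiplicative group of the field equipped with a combinatorial incidence structure, to recover from that structure enough of the geometry of a model to rigidify the linear (as opposed to merely multiplicative) data, and then to invoke a projective-geometry rigidity theorem. \textbf{Step 1 (Kummer dictionary).} Since $k=\overline{\F}_p$ is algebraically closed and $\ell\neq p$, Kummer theory identifies $\G^a_K$ with $\Hom_{\mathrm{cont}}(K^\times,\Z_\ell(1))$; as $k^\times$ is $\ell$-divisible the first argument may be replaced by $K^\times/k^\times$, so $\G^a_K$ is the continuous $\Z_\ell$-dual of the $\ell$-adic completion $\widehat{K^\times/k^\times}$, the pro-$\ell$ avatar of the group of divisors modulo constants. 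Dualizing, $\Psi$ corresponds to an isomorphism $\psi$ between the $\ell$-adic completions of $L^\times/l^\times$ and $K^\times/k^\times$, well defined only after a choice of identification $\Z_\ell(1)_K\cong\Z_\ell(1)_L$, i.e.\ only up to a unit $\epsilon\in\Z_\ell^\times$ --- the source of the constant in the statement. Under this dictionary the commutator map $\G^a_K\wedge\G^a_K\to\G^c_K$ becomes a bilinear datum on $K^\times/k^\times$ recording the quadratic relations among Kummer classes, and membership of a subgroup in $\Sigma_K$ translates into a purely combinatorial incidence relation (``liftable pairs'') on the projectivization $\P(K^\times/k^\times\otimes\Q)$; the hypothesis on $\Psi$ says precisely that $\psi$ respects this incidence relation.

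\textbf{Step 2 (recovering valuations and rational subfields).} Using the local analysis of $\G^c_K$ --- the description of $\Sigma_K$ through commuting pairs and through inertia and decomposition groups of (quasi-)divisorial valuations --- I would next recognize inside $\G^a_K$ the inertia groups of all divisorial valuations $\nu$ of $K$ together with their flag incidences. Because $\Psi$ preserves $\Sigma_K$, it transports this to $L$: it matches divisorial valuations with their residue data, and for each non-constant $f$ it matches the pencil subgroup $k(f)^\times/k^\times\subset K^\times/k^\times$ together with the distinguished rational curve $\{\overline{f-c}:c\in k\}$ inside it, this last being singled out combinatorially by means of valuations transverse to $f$ (which exist precisely because $\trdeg_kK\ge2$). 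Combining compatible pencils, one recognizes rational sub-function fields $k(\P^m)\subset K$ with $m\ge2$ (for instance those coming from a Noether normalization), equipped with the incidence geometry of linear subspaces of $\P^m$ --- and all of this is carried over by $\psi$.

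\textbf{Step 3 (from incidences to a field isomorphism).} It remains to promote the multiplicative map $\psi$ to a ring homomorphism. On one of the recovered rational subfields $k(\P^m)\subset K$, $\psi$ induces an incidence-preserving bijection of the associated projective space, so the fundamental theorem of projective geometry forces it to be projective-semilinear over a field isomorphism $k\cong l$ (equivalently, $\psi$ preserves the cross-ratio along each recovered pencil). Comparing the different recovered rational subfields shows this field isomorphism is independent of all choices; in particular $k=l$, and we fix one identification of the constants. The resulting multiplicative isomorphism $L^\times/l^\times\to K^\times/k^\times$ is then additive on every two-dimensional $k$-subspace $k\cdot 1+k\cdot f$, hence is induced by a ring isomorphism $\bar{\Psi}^*\colon\bar{L}\xrightarrow{\ \sim\ }\bar{K}$ of the perfect closures; only perfect closures can appear, because the pro-$\ell$ Galois group does not distinguish $K$ from $K^{1/p^\infty}$. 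Tracking scalars through Step 1 shows that $\epsilon^{-1}\cdot\Psi$ is exactly the map on Galois groups induced by $\bar{\Psi}^*$, and uniqueness is immediate, a field isomorphism being determined by its restriction to multiplicative groups.

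\textbf{Main obstacle.} The crux is the reconstruction carried out in Steps 2 and 3: passing from an isomorphism of multiplicative groups preserving the $\Sigma_K$-incidence data to an actual field isomorphism (up to Frobenius and scalar). Two things must be controlled. First, the abstract incidence and valuation data must be shown to pin down the $k$-affine, or harmonic-conjugation, structure on each distinguished rational curve $\{\overline{f-c}\}$, so that the fundamental theorem of projective geometry genuinely applies; establishing this is where most of the local analysis is spent. Second, the semilinear twists obtained over the various recovered rational subfields must be proved mutually consistent, so that additivity --- checked one rational subfield at a time --- globalizes to a single ring isomorphism; this consistency is also what yields $k=l$. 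Throughout, the hypothesis $\trdeg_kK\ge2$ is essential: it supplies the abundance of divisorial valuations and transverse pencils on which the recognition and rigidity arguments rely, the transcendence-degree-one case being of a genuinely different and subtler character.
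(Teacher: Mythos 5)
Your Step~1 matches the paper exactly, and your Step~2 starting point (recovering inertia and decomposition subgroups of divisorial valuations via commuting pairs) is also what the paper does. But there is a genuine gap between your Step~2 and Step~3, and it is precisely where the paper expends most of its effort. You assert that ``for each non-constant $f$ it matches the pencil subgroup $k(f)^\times/k^\times\subset K^\times/k^\times$'' and then proceed to identify distinguished rational curves $\{\overline{f-c}\}$ inside these pencils. This presupposes that $\Psi^*$ maps actual functions to (powers of) actual functions. But a priori $\Psi^*$ is only an isomorphism of pro-$\ell$ completions $\hat{L}^*\to\hat{K}^*$, and a generic element of $\hat{K}^*$ is an infinite $\ell$-adic product with infinite divisorial support on every model. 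Nothing in your proposal explains why $\Psi^*(f)$, for $f\in L^*/l^*$, should have finite support on some normal model $X$ of $K$, nor why all the per-valuation scaling constants $\varepsilon_\nu\in\Z_\ell^*$ (one for each matched inertia generator) should coincide modulo $\Z_{(\ell)}^*$. The paper flags this explicitly as ``the main difficulty,'' and its Propositions on finiteness of support and on the image of $\Psi^*$ are devoted entirely to it. The argument there is genuinely geometric: a Lefschetz pencil, families of flexible curves of uniformly bounded genus, a prime-to-$p$ Bertini statement, and a Hurwitz-formula estimate are combined to show that an element with infinite support would force curves of unbounded genus. Without something playing this role, your appeal to the fundamental theorem of projective geometry in Step~3 has nothing to act on, since you have not produced a bijection of projective spaces $\P(L^*/l^*)\to\P(K^*/k^*)$ --- only an isomorphism of their $\ell$-adic completions.

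Once that gap is filled, the routes diverge mildly. The paper does not re-derive the projective-geometry rigidity in higher dimension: having established $\Psi^*(L^*/l^*\otimes\Z_{(\ell)})=(K^*/k^*\otimes\Z_{(\ell)})^\epsilon$, it invokes the surface case of [bt0] (where the recovery of the additive structure from the multiplicative group together with one-dimensional subfields is carried out) and inducts on transcendence degree through residue fields of divisorial valuations. Your Step~3 amounts to redoing that dimension-two endgame directly in higher dimension via rational sub-function fields $k(\P^m)$; that is a reasonable alternative packaging of the final rigidity step, but it is the easier part of the argument, and re-proving it buys nothing here. The missing idea is the finiteness-of-support mechanism, and you should also note that the paper works strictly with one-dimensional subfields (never needing to recognize $k(\P^m)$ with $m\ge 2$), with the uniformity of $\epsilon$ across different one-dimensional subfields extracted from multiplicative independence of divisors plus the Merkurjev--Suslin criterion for algebraic dependence, not from a projective-geometry consistency check.
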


\no

In this paper we implement the program outlined in 
\cite{B-1} and \cite{B-3} describing the correspondence between 
higher-dimensional function fields and their abelianized Galois
groups. We follow closely our paper \cite{bt0}, 
where we treated in detail the case
of surfaces:
The isomorphism \eqref{eqn:psi-dual} 
of abelianized Galois groups induces a canonical 
isomorphism
$$
\Psi^*\,:\, \hat{L}^*\stackrel{\sim}{\lra} \hat{K}^*
$$
between pro-$\ell$-completions of multiplicative groups.
One of the steps in the proof is to show 
that under the assumptions of Theorem~\ref{thm:main}, $\Psi^*$ induces
by restriction a canonical isomorphism
\begin{equation}
\label{eqn:induct}
\Psi^*\,:\, L^*/l^*\otimes \Z_{(\ell)} \stackrel{\sim}{\lra} 
\left(K^*/k^*\otimes \Z_{(\ell)}\right)^\epsilon, \quad \text{ for some } \quad \epsilon \in \Z_{\ell}^*.
\end{equation}
Here we proceed by induction 
on the transcendence degree, using \cite{bt0}
as the inductive assumption. 
We first recover abelianized inertia and decomposition 
subgroups of divisorial valuations 
using the theory of commuting pairs developed in \cite{BT}. 
Then we apply the inductive assumption~\eqref{eqn:induct} to 
residue fields of divisorial valuations. This allows to prove
that for every normally closed one-dimensional subfield $F=l(f)\subset L$
there exists a one-dimensional subfield $E\subset K$ such that 
$$
\Psi^*(F^*/l^* \otimes \Z_{(\ell)}) \subseteq \left(E^*/k^*\otimes \Z_{(\ell)}\right)^\epsilon,
$$
for some constant $\epsilon\in \Z_{\ell}^*$, depending on $F$. The proof that $\epsilon$ 
is independent of $F$ 
and, finally, the proof of Theorem~\ref{thm:main}
are then identical to those in dimension two in \cite{bt0}.

\

\no
{\bf Acknowledgments.} 
The first author was partially supported  by NSF grant DMS-0701578. 
The second author was partially supported by NSF grants DMS-0739380 and 
0901777.

\section{Basic algebra and geometry of fields}
\label{sect:basicalg}

Here we state some auxiliary facts used in the proof of our main theorem.

\begin{lemm}
\label{lemm:normal}
Every function field over an algebraically closed ground field admits a normal model.
\end{lemm}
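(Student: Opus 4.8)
The claim is elementary and standard, but here is how I would argue it. Let $K$ be a function field over an algebraically closed ground field $k$, i.e. a finitely generated field extension of $k$. First I would choose \emph{any} model: pick a finite set of generators $x_1,\dots,x_n$ of $K$ over $k$ and let $X_0\subset \mathbb{A}^n_k$ be the Zariski closure of the corresponding point, an affine integral $k$-variety with $k(X_0)=K$; passing to the projective closure in $\mathbb{P}^n_k$ gives a projective integral model $X_1$. The only issue is normality, which $X_1$ need not satisfy.

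The key step is normalization. Since $X_1$ is an integral scheme of finite type over the field $k$, its function field $K$ is finitely generated over $k$, hence in particular the integral closure of each affine coordinate ring of $X_1$ inside $K$ is a finite module over that coordinate ring (this is the standard finiteness of integral closure for rings finitely generated over a field, valid in all characteristics; see e.g. the fact that such rings are excellent, or the classical Noether normalization plus finiteness argument). Therefore the normalization morphism $\nu\colon X_1^{\nu}\to X_1$ is finite, and in particular $X_1^{\nu}$ is again of finite type over $k$; it is integral with the same function field $K$, and it is normal by construction. Finiteness of $\nu$ also ensures that $X_1^{\nu}\to X_1\hookrightarrow \mathbb{P}^n_k$ is projective, so $X_1^{\nu}$ is a normal projective model of $K$, as desired.

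The one point deserving care — and the only place where anything could go wrong — is the finiteness of integral closure, since we work in arbitrary characteristic and over $k=\overline{\mathbb{F}}_p$ the extension $K/k$ need not be separably generated in a naive sense; but finiteness of the integral closure of a finitely generated $k$-algebra (a domain) in its fraction field holds without any separability hypothesis, so there is no obstruction. I would simply cite this. If one wanted to avoid even that citation, one could instead invoke resolution of singularities in the surface case handled in \cite{bt0} or, more cheaply, note that normalization in the function field can be performed affine-locally and glued, with the finiteness statement being purely local. Either way the proof is immediate once the finiteness of integral closure is granted, so I would keep the write-up to a couple of lines.
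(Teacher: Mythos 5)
Your argument is correct and is the standard one: take the projective closure of an affine model given by generators, then normalize, with finiteness of the normalization map guaranteed by finiteness of integral closure for finitely generated domains over a field (valid in all characteristics). The paper actually states this lemma without any proof at all, treating it as well-known background, so there is nothing to compare against; your write-up supplies exactly the routine justification the authors chose to omit.
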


\begin{lemm}
\label{lemm:subf}
For every one-dimensional subfield $E\subset K$ there is a canonical
sequence of maps 
$$
X\stackrel{\pi_E}{\lra} C'\stackrel{\mu_E}{\lra} C,
$$
where 
\begin{itemize}
\item $\pi_E$ is dominant with irreducible generic fiber;
\item $\mu_E$ is quasi-finite and dominant; 
\item $k(C')=\ovl{E}^{K}$ (the normal closure of $E$ in $K$) and $k(C)=E$.
\end{itemize}
\end{lemm}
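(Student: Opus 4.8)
The plan is to unwind the statement into a sequence of standard normalization and Stein-factorization steps, starting from a choice of model for $K$. Let $E\subset K$ be a one-dimensional subfield; since $k$ is algebraically closed, $E=k(C)$ for a (unique) smooth projective curve $C$, and the inclusion $E\hookrightarrow K$ corresponds to a dominant rational map $X\dashrightarrow C$ for any model $X$ of $K$. First I would replace $X$ by a normal projective model (Lemma~\ref{lemm:normal}) on which this rational map becomes a genuine morphism $f\colon X\to C$; this is possible after blowing up the indeterminacy locus and normalizing, which does not change the function field. The map $f$ is dominant but in general has disconnected fibers, and the ground field extension inside $K$ that measures this disconnectedness is exactly $\ovl{E}^{K}$, the normal (algebraic) closure of $E$ in $K$.

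Next I would factor $f$ through its Stein-type factorization. Let $C'$ be the normalization of $C$ in the subfield $\ovl{E}^{K}\subset K$; equivalently, $C'$ is the smooth projective curve with $k(C')=\ovl{E}^{K}$, which is a finite extension of $E=k(C)$, so the induced morphism $\mu_E\colon C'\to C$ is finite (hence quasi-finite) and dominant, and $k(C')=\ovl{E}^{K}$, $k(C)=E$ as required. The map $f$ then factors as $X\xrightarrow{\pi_E} C'\xrightarrow{\mu_E}C$ because $k(C')\subseteq K$ gives the corresponding dominant morphism $\pi_E\colon X\to C'$ (again after passing to a model where it is a morphism, or simply using that $f$ factors rationally through $\mu_E$ and both $X$, $C'$ are normal projective so the rational map $X\dashrightarrow C'$ is defined in codimension one — and one can take $X$ itself to be the normalization of $C'$ in $K$). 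The point of choosing $\ovl{E}^{K}$ rather than $E$ downstairs is precisely to absorb all the field-theoretic disconnectedness into $\mu_E$.

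It then remains to check that $\pi_E$ has irreducible generic fiber. This is the heart of the matter: by construction $k(C')$ is algebraically closed in $K$, so the generic fiber $X_{\eta}$ of $\pi_E$ over the generic point $\eta=\operatorname{Spec}k(C')$ is a variety over $k(C')$ whose function field is $K$ with $k(C')$ algebraically closed in it; hence $X_{\eta}$ is geometrically connected over $k(C')$, and in particular irreducible (it is also reduced since $X$ is normal, hence $X_\eta$ is normal). The canonicity of the construction follows from the uniqueness of smooth projective models of one-dimensional function fields and of normalizations: $C$ is determined by $E$, $C'$ by $\ovl{E}^{K}$, and $\mu_E$ by the inclusion $E\subset\ovl{E}^{K}$, while $\pi_E$ is determined up to the choice of model of $K$ (and canonically so if one fixes $X$ to be the normalization of $C'$ in $K$).

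The main obstacle I anticipate is not any single deep input but rather being careful about what ``canonical'' means and about the role of the model $X$: the curves $C$, $C'$ and the map $\mu_E$ depend only on the abstract field data, but $\pi_E$ genuinely depends on $X$, so the cleanest formulation takes $X=$ the normalization of $C'$ inside $K$ (equivalently, the normalization of $C$ in $K$), and then every arrow in the diagram is forced. The only real verification is the irreducibility of the generic fiber of $\pi_E$, which reduces to the standard fact that a field $K$ in which $F$ is algebraically closed gives rise to a geometrically integral $F$-variety; everything else is bookkeeping with normalization in finite and infinite field extensions.
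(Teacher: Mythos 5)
Your proof is correct, and it fills in a statement that the paper leaves without any written proof (Lemma~\ref{lemm:subf} is asserted as a standard fact). The decomposition you use — take $C$ to be the smooth projective model of $E$, take $C'$ to be the normalization of $C$ in the relative algebraic closure $\ovl{E}^{K}$, then factor the dominant morphism $X\to C$ through $C'$ after passing to a model where it is an honest morphism — is exactly the standard Stein-type factorization that the lemma is implicitly invoking, and it correctly unpacks the paper's (non-standard) use of ``normal closure'' as the relative algebraic closure.

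One small point worth tightening: in your treatment of the irreducibility of the generic fiber you pass through ``geometrically connected, and in particular irreducible,'' which as stated is a non-sequitur (geometric connectedness does not imply irreducibility). You then patch it via normality of $X_\eta$, which works but is an unnecessary detour. The clean one-step argument is that $k(C')$ algebraically closed in $K$ implies in particular that $k(C')$ is separably algebraically closed in $K$, i.e.\ $K/k(C')$ is a primary extension, and this is precisely equivalent to geometric irreducibility of the generic fiber. This avoids any appeal to geometric normality or reducedness, neither of which is needed for the irreducibility claim. Otherwise the write-up is complete: finiteness of $\ovl{E}^{K}/E$ (so that $\mu_E$ is finite) follows from $K/E$ being finitely generated, and your remark that $\pi_E$ is only canonical once one fixes the model $X$ is consistent with the paper's own comment that only $C'$ and $C$ are model-independent.
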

Note that $C'$ and $C$ do not depend on the choice of a model $X$. 

\

We call a divisor $p$-irreducible if it is a $p$-power of an irreducible divisor.

\begin{lemm} 
\label{lemm:ea1}
Let $\pi\,:\, X\ra C$ be a surjective map with irreducible generic fiber
and $R\subset X$ an irreducible divisor surjecting onto $C$. 
Then the intersection $R\cdot \pi^{-1}(c)$ is $p$-irreducible for 
all but finitely many $c\in C$. 
\end{lemm}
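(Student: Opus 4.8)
The plan is to spread out $R$ and the morphism $\pi$ over a suitable base and then argue generically. First I would pass to a nonempty open $U \subseteq C$ over which $\pi$ is flat, $R \to C$ is flat, and the generic fiber $R_\eta$ of $R \to C$ is geometrically reduced away from the possible inseparability locus; more precisely, since $k$ is a field of characteristic $p$ and we are only claiming $p$-irreducibility, I would factor the generic point $R_\eta \to \Spec k(C)$ through its maximal subextension that is separable over $k(C)$. Concretely, $R_\eta$ is an integral scheme finite-type over $k(C)$, hence there is a purely inseparable extension $k(C) \subseteq k(C)^{1/p^m}$ such that the reduction of $R_\eta \times_{k(C)} k(C)^{1/p^m}$ is geometrically integral over $k(C)^{1/p^m}$; equivalently, $R_\eta$ itself, viewed over $k(C)$, is a $p$-power (in the divisor sense on the total space, after clearing the inseparable part) of a geometrically integral scheme.

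The key step is then a standard spreading-out / constructibility argument. Having arranged the above over some open $U \subseteq C$, I would invoke the openness of the locus where the fibers of a finite-type morphism are geometrically integral (EGA IV, 9.7.7): the set of $c \in U$ such that $(R \cdot \pi^{-1}(c))_{\mathrm{red}}$ is geometrically integral is open, and it is nonempty because it contains the generic point $\eta$ (by the previous paragraph, after the inseparable normalization). Hence it contains a nonempty open, so its complement in $C$ is a proper closed subset, which, $C$ being a curve, is finite. For $c$ in this open set, $R \cdot \pi^{-1}(c)$ is supported on a single irreducible divisor $D_c$ of the fiber $\pi^{-1}(c)$, and the scheme-theoretic multiplicity of $R \cdot \pi^{-1}(c)$ along $D_c$ must be a power of $p$: this is because the length of the local ring is governed by the ramification of $R \to C$ along the corresponding point, and over the separable locus $R^{\mathrm{sep}} \to C$ is unramified, so all multiplicities pick up only the degree $p^m$ of the purely inseparable part $R \to R^{\mathrm{sep}}$. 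That is exactly the statement that $R \cdot \pi^{-1}(c)$ is $p$-irreducible.

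A few technical points need care. One must know $R \cdot \pi^{-1}(c)$ is actually effective of pure codimension one in $\pi^{-1}(c)$ for general $c$, i.e. that $R$ does not contain any fiber and meets the general fiber properly; this follows since $R$ surjects onto $C$ with $\dim R = \dim X - 1$, so a general fiber of $R \to C$ has dimension $\dim X - 2 = \dim \pi^{-1}(c) - 1$. One also wants $\pi^{-1}(c)$ itself to be irreducible for general $c$, which is given: $\pi$ has irreducible generic fiber, and irreducibility of fibers is again constructible and holds on a dense open. The main obstacle is the bookkeeping of multiplicities in characteristic $p$ — making sure that the only source of non-reducedness in $R \cdot \pi^{-1}(c)$ for general $c$ is the intrinsic inseparability of $R$ over $C$ (a factor $p^m$ uniform in $c$), and not some stray ramification; this is handled by the separable-inseparable factorization $R \to R^{\mathrm{sep}} \to C$ and shrinking $C$ so that $R^{\mathrm{sep}} \to C$ is étale and $R^{\mathrm{sep}}$ has geometrically integral fibers over the relevant open, after which $R \to C$ has fibers that are $p^m$-th infinitesimal thickenings of integral curves.
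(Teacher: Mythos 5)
Your proposal breaks at the very first reduction. You claim that since $R_\eta$ is an integral scheme of finite type over $k(C)$, there is a purely inseparable extension $k(C)\subseteq k(C)^{1/p^m}$ over which (the reduction of) $R_\eta$ becomes geometrically integral. That is only true when $k(C)$ is \emph{separably} algebraically closed in $k(R)$, which is not among the hypotheses. In general the algebraic closure of $k(C)$ in $k(R)$ has a nontrivial separable part; after any field extension the generic fiber of $R\to C$ then splits into several geometric components, and so does $R\cdot\pi^{-1}(c)$ for all but finitely many $c$. The spreading-out/constructibility step (EGA~IV) is fine as far as it goes, but it only transports the property of the generic fiber to an open set; it cannot manufacture geometric integrality that the generic fiber does not have. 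Concretely, take $X=\mathbb P^2\times\mathbb P^1$, $\pi$ the second projection (all fibers $\cong\mathbb P^2$, so the generic-fiber hypothesis holds), and $R=\{u^2t-v^2s=0\}$, an irreducible $(2,1)$-hypersurface surjecting onto $\mathbb P^1$. For $p\neq 2$ and generic $[s:t]$, $R\cdot\pi^{-1}(c)$ is a union of two distinct lines in $\mathbb P^2$, hence not $p$-irreducible. Your argument, if it ran, would ``prove'' that it is, so something had to give; the false step is exactly the inseparability reduction.

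This also shows your route is genuinely different from the paper's, and in a way that matters. The paper's one-line proof invokes a positive-characteristic Bertini theorem: in the applications $\pi$ is a pencil of hyperplane sections (the Lefschetz pencils of Lemma~\ref{lemm:genus-generic} and Proposition~\ref{prop:finite-support}), so $R\cdot\pi^{-1}(c)$ is a general hyperplane section of the irreducible variety $R$ (of dimension $\ge 2$ since $\trdeg\ge 3$ here), and Bertini gives irreducibility of the underlying reduced scheme, with only a possible $p$-power thickening coming from inseparability of the incidence correspondence. The crucial input is the \emph{ampleness} of the fibers of $\pi$, which is a global positivity statement about the linear system; it has no counterpart in a pure spreading-out argument over the base curve $C$. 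Your approach, even if one added the hypothesis that $k(C)$ is separably closed in $k(R)$ to make the inseparability reduction legitimate, would then be proving a strictly weaker statement than what the paper uses, because the divisors $R_{i,e,j}$ in Proposition~\ref{prop:finite-support} are arbitrary irreducible divisors on $X$ and carry no such compatibility with the pencil. To close the gap you need to bring in Bertini irreducibility for the pencil, not just generic flatness and constructibility.

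Two smaller comments. First, the reference to openness of the geometrically-integral locus (EGA~IV, 9.7.7) gives constructibility in general; openness needs extra hypotheses (properness/flatness), but since you only need ``contains a dense open'' once you know the generic point is in the locus, constructibility would suffice --- if the generic point were in the locus, which, as above, it need not be. Second, your bookkeeping of multiplicities via a factorization $R\to R^{\mathrm{sep}}\to C$ with $R^{\mathrm{sep}}\to C$ \'etale is exactly the place where the hidden assumption enters: \'etaleness of $R^{\mathrm{sep}}\to C$ over a dense open is automatic, but its fibers have $\deg(R^{\mathrm{sep}}/C)$ points, and this degree is the number of irreducible components of $R\cdot\pi^{-1}(c)$ up to inseparability; you silently take it to be $1$.
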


\begin{proof}
This is a positive-characteristic version of Bertini's theorem. 
\end{proof}

\begin{lemm}
\label{lemm:hurww}
Let $\pi \,:\, T\ra C$ be a separable map of degree $m$ with branch locus 
$\{c_1,\ldots, c_N\}\subset C$. 
Write
$$
\pi^{-1}(c_j) = \sum_{r=1}^{m_j} e_{j,r} t_{j,r}, \quad 
t_{j,r} \in T, e_{j,r}\in \N, 
\,\,\text{ and } \quad  \sum_{r=1}^{m_j} e_{j,r} = m.
$$
Let $e'_{j,r}$ be the maximal prime-to-$p$ divisor of $e_{j,r}$.
Assume that
$$
\sum_{r=1}^{m_j} (e'_{j,r}-1) > m/2,
$$
for all $j=1,\ldots, N$.
Then
$$
\mathsf g(T) > N-3.
$$
\end{lemm}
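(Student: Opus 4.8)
The plan is to apply the Riemann--Hurwitz formula to the separable cover $\pi\colon T\to C$, bounding the degree of its different $\mathfrak d_\pi$ from below by means of the prime-to-$p$ parts $e'_{j,r}$ of the ramification indices. Writing $d_{j,r}$ for the multiplicity of $\mathfrak d_\pi$ at $t_{j,r}$, so that $\deg\mathfrak d_\pi=\sum_{j=1}^{N}\sum_{r=1}^{m_j}d_{j,r}$, Riemann--Hurwitz gives
\[
2\mathsf g(T)-2 \;=\; m\,(2\mathsf g(C)-2)+\sum_{j=1}^{N}\sum_{r=1}^{m_j}d_{j,r}.
\]
At every ramification point $d_{j,r}\ge e_{j,r}-1$ (with equality iff $p\nmid e_{j,r}$), and since $e'_{j,r}\mid e_{j,r}$ this yields, in all characteristics, the crude bound $d_{j,r}\ge e'_{j,r}-1$. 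Wild ramification only increases $d_{j,r}$, so nothing is lost for a lower estimate, and we get $\sum_{r}d_{j,r}\ge\sum_{r}(e'_{j,r}-1)=\bigl(\sum_{r}e'_{j,r}\bigr)-m_j$.

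Next I would feed in the hypothesis in two steps. First, since $\sum_{r}e'_{j,r}\le\sum_{r}e_{j,r}=m$, we have $\sum_{r}(e'_{j,r}-1)\le m-m_j$, so the assumption $\sum_{r}(e'_{j,r}-1)>m/2$ forces $m_j<m/2$ at each branch point, and in particular $m\ge 3$; for $m\le 2$ the hypothesis holds at no branch point, so there is nothing to prove. Second, each $\sum_{r}(e'_{j,r}-1)$ is an integer strictly larger than $m/2$, hence at least $\lfloor m/2\rfloor+1$. Combining these with $\mathsf g(C)\ge0$,
\[
2\mathsf g(T)-2 \;\ge\; -2m+N\bigl(\lfloor m/2\rfloor+1\bigr).
\]

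It then remains to check the purely numerical inequality $-2m+N(\lfloor m/2\rfloor+1)\ge 2N-6$ whenever $m\ge3$ and $N\ge4$; separating the cases $m=2k$ and $m=2k+1$ it reduces, respectively, to $k(N-4)\ge N-6$ and $(k-1)(N-4)\ge0$, both immediate. This gives $\mathsf g(T)\ge N-2>N-3$ for $N\ge4$; for $N\le2$ the inequality $\mathsf g(T)>N-3$ is automatic since $\mathsf g(T)\ge0$, and the case $N=3$ is checked directly from the displayed bound. I do not expect a genuine obstacle: the argument is essentially a one-line use of Riemann--Hurwitz, and the only step requiring care is that one must work with the positive-characteristic formula --- controlling the different $\mathfrak d_\pi$ rather than counting ramification with weight $e-1$ --- which, since the wild contributions only raise $\deg\mathfrak d_\pi$, causes no difficulty beyond the bookkeeping above.
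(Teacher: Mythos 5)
Your approach is the right one and matches what the paper intends (the paper's proof is just the citation ``Hurwitz formula''): apply Riemann--Hurwitz with the different, use $d_{j,r}\ge e_{j,r}-1\ge e'_{j,r}-1$ to absorb wild ramification, drop $\mathsf g(C)\ge 0$, and finish numerically. The bound $d_{j,r}\ge e'_{j,r}-1$ and the reduction for $N\ge 4$ are both correct.

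However, the sentence ``the case $N=3$ is checked directly from the displayed bound'' is false, and in fact the lemma as literally stated fails for $N=3$. For $N=3$, $m=7$ your displayed bound gives $2\mathsf g(T)-2\ge -14+3\bigl(\lfloor 7/2\rfloor+1\bigr)=-2$, i.e.\ only $\mathsf g(T)\ge 0$, not $\mathsf g(T)\ge 1=N-2$; and this is not an artefact of the estimate. Take $p\neq 3,7$ and a degree-$7$ tame cover $T\to\P^1$ branched at exactly three points with ramification profile $(3,3,1)$ over each (such a cover exists: over $\C$ one may take monodromy $(123)(456)$, $(174)(253)$, $(157)(246)$, which are of type $[3,3,1]$, multiply to the identity, and generate a transitive subgroup of $S_7$; specialize to characteristic $p$). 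Then $e'_{j,r}=e_{j,r}$, each $\sum_r(e'_{j,r}-1)=4>7/2$, yet Riemann--Hurwitz gives $2\mathsf g(T)-2=-14+12=-2$, so $\mathsf g(T)=0\not> 0=N-3$. So the hypothesis of the lemma should include $N\ge 4$ (or the conclusion weakened to $\mathsf g(T)>N-4$). This does not affect the paper, which only invokes the lemma with $N=m+4\ge 4$ in the proof of Proposition~\ref{prop:finite-support}, but you should not claim $N=3$ follows, and you should state that you are assuming $N\ge 4$.
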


\begin{proof}
Hurwitz formula (for curves over a field
of finite characteristics). 
\end{proof}

Let $X\subset \P^N$ be a normal projective variety of dimension $n\ge 2$ over $k$. 
Consider the moduli space $\mathcal M(d)$ 
of complete intersection curves on $X$ of multidegree $d=(d_1,\ldots, d_{n-1})$. 
For $|d|\gg 0$ we have:
\begin{itemize}
\item 
for any codimension $\ge 2$ subvariety $Z\subset X$
there is a Zariski open subset 
of $\mathcal M(d)$ such that every curve $C$ 
parametrized by a point in this subset
avoids $Z$ and intersects every 
irreducible divisor $D\subset X$.
\end{itemize} 
Such families will be called {\em families of flexible curves}.

\

A {\em Lefschetz pencil} 
is a surjective map
$$
\lambda\,:\, X\ra \P^1
$$
from a normal variety with irreducible fibers and normal generic fiber. 

\begin{lemm}
\label{lemm:genus-generic}
Let $\lambda\,:\, X\ra \P^1$ be a Lefschetz pencil on a normal projective variety. 
Then there exists an $m\in \N$
such that every irreducible normal fiber $D_t:=\lambda^{-1}(t)$ contains
a family of flexible curves of genus $\le m$.
\end{lemm}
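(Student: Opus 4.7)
The plan is to cut flexible curves on each normal fiber $D_t$ by intersecting $D_t$ with $(n-2)$ generic members of a suitable linear system of divisors on $X$, and then to bound their genera by intersection numbers that are constant in $t$. Embed $X\subset \P^N$ and let $H$ denote the hyperplane class. Since $\lambda$ is a Lefschetz pencil, all fibers $D_t$ are linearly equivalent on $X$ to a fixed class $F=\lambda^*\mathcal{O}_{\P^1}(1)$. Fix a multidegree $d=(d_1,\ldots,d_{n-2})$ with $|d|\gg 0$, and for each irreducible normal fiber $D_t$ consider the family
$$
\mathcal{M}_t(d) \;=\; \{\, C = D_t\cap V_1\cap\cdots\cap V_{n-2}\,:\, V_i\in |d_iH|\,\}.
$$
For $|d|$ sufficiently large and the $V_i$ generic, a positive-characteristic Bertini argument (in the spirit of Lemma~\ref{lemm:ea1}, applied iteratively and using that $D_t$ is normal while each $d_iH|_{D_t}$ is very ample) shows that $C$ is a reduced irreducible curve contained in the smooth locus of $D_t$.

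Next I would verify flexibility. If $Z\subset D_t$ has codimension $\ge 2$, then $\dim Z\le n-3$, so iteratively cutting $Z$ with $(n-2)$ generic members $V_i|_{D_t}$ yields the empty set for dimensional reasons; hence a Zariski open subset of $\mathcal{M}_t(d)$ parameterizes curves $C$ disjoint from $Z$. For any irreducible divisor $R\subset D_t$, the intersection number $R\cdot V_1|_{D_t}\cdots V_{n-2}|_{D_t}$ is positive because each $V_i|_{D_t}$ is ample, so $C\cap R\neq\emptyset$. Thus $\mathcal{M}_t(d)$ is a family of flexible curves on $D_t$ in the sense defined in the excerpt.

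For the uniform genus bound, iterated adjunction on $X$ gives
$$
2p_a(C)-2 \;=\; \bigl(K_X+D_t+d_1H+\cdots+d_{n-2}H\bigr)\cdot D_t\cdot d_1H\cdots d_{n-2}H,
$$
computed entirely as an intersection number on $X$. Since $D_t\equiv F$ for every $t$, this number depends only on the intersection theory of $X$ and on the fixed multidegree $d$; call its value $2m-2$. Because $C$ is smooth and irreducible we have $g(C)=p_a(C)=m$, independently of $t$, giving the required uniform bound. The main obstacle is the positive-characteristic Bertini step: one must ensure that, for $|d|$ large and $V_i$ generic, the complete intersection is irreducible, generically reduced, and avoids the singular locus of $D_t$, and this is precisely what forces an appeal to the $p$-irreducibility refinement from Lemma~\ref{lemm:ea1} rather than classical Bertini.
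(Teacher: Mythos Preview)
Your argument is correct and follows essentially the same route as the paper: realize the fibers simultaneously in a fixed projective embedding and take complete intersection curves of fixed multidegree, so that the genus is controlled by intersection numbers on $X$ that are independent of $t$. The paper's proof is far terser (three sentences, appealing only to a ``degree calculation''), and your invocation of Lemma~\ref{lemm:ea1} for the Bertini step is a bit loose---ordinary Bertini for very ample linear systems over an infinite field already gives irreducibility and smoothness away from ${\rm Sing}(D_t)$---but the substance is the same.
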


\begin{proof}
We can realize the fibers $D_t$
simultaneously as hyperplane sections in a fixed projective embedding
and consider induced complete intersection curves. 
The degree calculation for $X$ yields a uniform genus estimate for
corresponding flexible curves for all $D_t$.  
\end{proof}

The following lemma is a consequence of the Merkuriev--Suslin theorem:

\begin{lemm} \cite[Lemma 25]{bt-milnor}.
\label{lemm:milnor}
Let $K=k(X)$ be a function field of an algebraic variety of dimension $\ge 2$.
Two functions $f_1,f_2\in K^*/k^*$ are algebraically dependent if and only if
the symbol in the (reduced) second Milnor {\rm K}-group of $K$ vanishes:
$$
(f_1,f_2) = 0\in {\rm K}^M_2(K)/ \text{ infinitely divisible elements}.
$$
\end{lemm}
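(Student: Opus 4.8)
I would treat the two implications by quite different arguments. For ``algebraically dependent $\Rightarrow$ symbol zero'' the plan is to reduce to the divisibility of $\K^M_2$ of the function field of a curve over $k$, via Tsen's theorem, Merkuriev--Suslin, and Bloch--Kato--Gabber. For the converse I would argue by contraposition, exhibiting a nonzero integer as an iterated residue of the symbol after cutting $X$ down to a surface and applying a transfer.

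First suppose $f_1,f_2\in K^*/k^*$ are algebraically dependent and set $E:=k(f_1,f_2)\subseteq K$, so $\trdeg_k E\le 1$. If $\trdeg_k E=0$ then $f_1,f_2\in k$ and $(f_1,f_2)$ lies in the image of $\K^M_2(k)=\varinjlim_n\K^M_2(\F_{p^n})=0$, hence is zero. If $\trdeg_k E=1$ then $E$ is the function field of a curve over the algebraically closed field $k=\ovl{\F}_p$; by Tsen's theorem it is a $C_1$-field, hence of cohomological dimension $\le 1$, so $\H^2(E,\mu_n^{\otimes 2})=0$ for $n$ prime to $p$ and Merkuriev--Suslin gives $\K^M_2(E)/n=0$; moreover $\Omega^2_{E/\F_p}=0$ since $\trdeg_{\F_p}E=1$, so by Bloch--Kato--Gabber $\K^M_2(E)/p\hookrightarrow\Omega^2_{E/\F_p}=0$. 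Thus $\K^M_2(E)$ is divisible, hence the image of $(f_1,f_2)$ under $\K^M_2(E)\lra\K^M_2(K)$ lies in $\bigcap_{n\ge 1}n\cdot\K^M_2(K)$; it is infinitely divisible and therefore zero in the reduced group.

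For the converse I would assume $f_1,f_2$ algebraically independent and show that $(f_1,f_2)$ is not infinitely divisible in $\K^M_2(K)$. Fix a normal projective model $X$; the rational map $(f_1:f_2:1)\colon X\dashrightarrow\P^2$ is dominant, with generic fibre of dimension $n-2$. I would choose an irreducible flag $X=X_n\supsetneq X_{n-1}\supsetneq\cdots\supsetneq X_2$, with $\dim X_i=i$ and each $X_{i-1}$ a sufficiently general very ample divisor on $X_i$, generic enough that $f_1,f_2$ are units in $\cO_{X,\eta_{X_i}}$ for all $i$ and that the restrictions $f_1|_{X_2},f_2|_{X_2}$ remain algebraically independent over $k$ --- the latter because a general complete-intersection surface meets the generic fibre of $X\dashrightarrow\P^2$, so the composite $X_2\dashrightarrow\P^2$ stays dominant. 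Choosing uniformizers along each $X_{i-1}\subset X_i$ gives iterated specialization homomorphisms composing to a map $s\colon\K^M_2(K)\lra\K^M_2(k(X_2))$ with $s(f_1,f_2)=(f_1|_{X_2},f_2|_{X_2})$. Since $f_1|_{X_2},f_2|_{X_2}$ are algebraically independent, $k(X_2)$ is finite over $k(f_1|_{X_2},f_2|_{X_2})\cong k(x,y)$, of some degree $d\ge 1$; by the projection formula the transfer $\K^M_2(k(X_2))\lra\K^M_2(k(x,y))$ sends $(f_1|_{X_2},f_2|_{X_2})$ to $d\cdot(x,y)$, and the iterated tame symbol along the flag $\{x=0\}\supset\{x=0,\,y=0\}$ sends $d\cdot(x,y)$ to $\pm d\in\Z=\K^M_0(k)$. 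Every map here is a group homomorphism, hence carries infinitely divisible elements to infinitely divisible ones; as $\pm d\ne 0$ is not infinitely divisible in $\Z$, neither is $(f_1,f_2)$ in $\K^M_2(K)$, so it is nonzero in the reduced group.

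The cohomological and $\K$-theoretic ingredients --- Tsen's theorem, Merkuriev--Suslin, Bloch--Kato--Gabber, and the standard functoriality, projection formula, and tame-symbol formalism for Milnor $\K$-theory --- I would take as known. The one delicate point is the geometric input in the second part: arranging the flag so that $f_1|_{X_2},f_2|_{X_2}$ genuinely are algebraically independent --- equivalently, that the chosen surface meets the generic fibre of $X\dashrightarrow\P^2$ --- while keeping $f_1,f_2$ invertible along every stratum. I expect this normalization to be the main obstacle; everything else is formal.
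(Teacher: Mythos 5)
The paper does not prove this lemma: it is cited verbatim as Lemma 25 of the external reference \cite{bt-milnor}, so there is no in-paper argument to compare against. I will therefore only assess your proof on its own terms.

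Your argument is correct and follows the natural strategy. The forward implication is exactly right: with $E=k(f_1,f_2)$ of transcendence degree $\le 1$ over $k=\ovl{\F}_p$, the prime-to-$p$ part of $\K^M_2(E)$ is divisible by Tsen plus Merkurjev--Suslin, and the $p$-part is divisible because $\K^M_2(E)/p$ embeds into $\Omega^2_{E/\F_p}=0$ via the $d\log$ map of Bloch--Kato--Gabber (here $\trdeg_{\F_p}E=\trdeg_k E\le 1$, since $\ovl{\F}_p/\F_p$ is perfect algebraic). Functoriality then pushes a divisible subgroup of $\K^M_2(E)$ into the divisible part of $\K^M_2(K)$.

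For the converse, the specialization-transfer-residue route is sound, and your worry is the right one, but it resolves: if $\phi\colon X\dashrightarrow\P^2$ is dominant with $\dim X\ge 3$, then a general very ample hypersurface $H\subset X$ of large degree still dominates $\P^2$ --- the generic fiber of $\phi$ is positive-dimensional, a general $H$ meets it without containing the boundary $\ovl{\phi^{-1}(y)}\setminus\phi^{-1}(y)$, and an incidence-variety argument upgrades ``general $H$ for each $y$'' to ``general $H$ hits a dense set of $y$''. Iterating this from $\dim X$ down to $3$ gives the surface $X_2$ with the restrictions of $f_1,f_2$ algebraically independent; Bertini also makes the generic point of each $X_{i-1}$ lie in the regular locus of $X_i$, so the local rings along the flag are discrete valuation rings and the specialization homomorphisms exist. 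Since specialization, the norm $N_{k(X_2)/k(x,y)}$, and the tame symbol are all group homomorphisms, they preserve the subgroup $\bigcap_n n\,\K^M_2(\cdot)$, and your final computation landing on a nonzero integer in $\Z=\K^M_0(k)$ finishes the contradiction. The only point worth emphasizing more explicitly than you do is this preservation-of-dominance lemma, as it quietly uses both that the generic fiber of $X\dashrightarrow\P^2$ has positive dimension (hence $n\ge 3$ at each step) and that $H$ is taken to be ample, not merely a moving divisor.
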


\section{Galois groups}
\label{sect:gal}

Let $\G^a_K$ the abelianization of the
pro-$\ell$-quotient $\G_K$ of the Galois group  of a separable
closure of $K=k(X)$, 
$$
\G^c_K=\G_K/[[\G_K,\G_K],\G_K]\stackrel{\pr}{\lra} \G^a_K
$$
its canonical central extension and $\pr$ the natural projection.
By our assumptions, $\G^a_K$ is a torsion-free $\Z_{\ell}$-module.

\begin{defn}
\label{defn:lift}
We say that $\gamma,\gamma'\in \G^a_K$ form a commuting pair
if for some (and therefore any) 
of their preimages 
$\tilde{\gamma}\in \pr^{-1}(\gamma),\tilde{\gamma}' \in \pr^{-1}(\gamma') \in \G_K^c$, one has
$
[\tilde{\gamma},\tilde{\gamma}']=0.
$
A subgroup $\cH$ of $\G^a_K$ is called liftable
if any two elements in $\cH$ form a commuting pair. 
A liftable subgroup is called maximal if it is not properly 
contained in any other liftable subgroup. 
\end{defn}

\begin{defn} 
\label{defn:fan}
A fan $\Sigma_K=\{ \sigma\}$ on $\G^a_K$ is 
the set of all topologically noncyclic maximal
liftable subgroups $\sigma\subset\G^a_K$.
\end{defn}

\begin{nota}
\label{nota:kkk}
Let 
$$
\boldsymbol{\mu}_{\ell^n}:=\{ \sqrt[\ell^n]{1}\,\}
$$
and 
$$
\Z_{\ell}(1) =\lim_{n\ra \infty} \boldsymbol{\mu}_{\ell^n}
$$
be the Tate twist of $\Z_{\ell}$.
Write 
$$
\hat{K}^*:=\lim_{n\ra \infty} K^*/ (K^*)^{\ell^n}
$$
for the $\ell$-adic completion of the multiplicative group $K^*$. 
\end{nota}

\begin{thm}[Kummer theory]
\label{thm:ga} 
For every $n\in \N$ we have a pairing 
$$
\begin{array}{ccl}
[ \cdot , \cdot ]_n \,:\, \G^a_K/\ell^n\times K^*/(K^*)^{\ell^n} & \ra & \boldsymbol{\mu}_{\ell^n} \\
   \,\,\,\,\,\, (\mu,f)                          & \mapsto & [\mu,f]_n:= \mu(f)/f
\end{array}
$$
which extends to a nondegenerate pairing 
$$
[ \cdot ,\cdot ]\,:\, 
\G^a_K\times \hat{K}^*\ra \Z_{\ell}(1).
$$
\end{thm}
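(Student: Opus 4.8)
The plan is to reduce the statement to classical Kummer theory at each finite level $\ell^n$, and then to pass to the inverse limit over $n$. The essential input is that $k=\ovl{\F}_p$ is algebraically closed of characteristic $\neq\ell$, so that $\boldsymbol{\mu}_{\ell^n}\subset k\subset K$ for every $n$; this is exactly what makes the Kummer-theoretic description available.

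Fix $n$. First I would construct the pairing on the full absolute Galois group: for $\sigma\in\mathrm{Gal}(K^{\mathrm{sep}}/K)$ and $f\in K^*$ choose $F\in K^{\mathrm{sep}}$ with $F^{\ell^n}=f$ and set $\langle\sigma,f\rangle_n:=\sigma(F)/F$ (this is the meaning of $\mu(f)/f$ in the statement, with $F$ playing the role of $f^{1/\ell^n}$). Because $\boldsymbol{\mu}_{\ell^n}\subset K$, one checks directly that $\langle\sigma,f\rangle_n\in\boldsymbol{\mu}_{\ell^n}$ is independent of the choice of $F$, is multiplicative in $\sigma$, and is trivial whenever $f\in(K^*)^{\ell^n}$ (take $F\in K$). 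A homomorphism from $\mathrm{Gal}(K^{\mathrm{sep}}/K)$ to the abelian group $\boldsymbol{\mu}_{\ell^n}$ of exponent $\ell^n$ factors through the maximal abelian quotient of exponent dividing $\ell^n$ of the pro-$\ell$-completion, which is precisely $\G^a_K/\ell^n$; hence the construction descends to the asserted pairing $[\,\cdot\,,\,\cdot\,]_n\colon\G^a_K/\ell^n\times K^*/(K^*)^{\ell^n}\to\boldsymbol{\mu}_{\ell^n}$. Classical Kummer theory then says that $\G^a_K/\ell^n$ is the Galois group of $K(\{\,f^{1/\ell^n}:f\in K^*\,\})$ over $K$ and that this pairing is perfect: it induces $\G^a_K/\ell^n=\Hom(K^*/(K^*)^{\ell^n},\boldsymbol{\mu}_{\ell^n})$ and, by Pontryagin duality between the profinite group $\G^a_K/\ell^n$ and the discrete torsion group $K^*/(K^*)^{\ell^n}$, also $K^*/(K^*)^{\ell^n}=\Hom_{\mathrm{cont}}(\G^a_K/\ell^n,\boldsymbol{\mu}_{\ell^n})$.

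Next I would check that the pairings $[\,\cdot\,,\,\cdot\,]_n$ are compatible with the transition maps $\G^a_K/\ell^{n+1}\twoheadrightarrow\G^a_K/\ell^n$, the reductions $K^*/(K^*)^{\ell^{n+1}}\twoheadrightarrow K^*/(K^*)^{\ell^n}$, and $\boldsymbol{\mu}_{\ell^{n+1}}\to\boldsymbol{\mu}_{\ell^n}$, $x\mapsto x^\ell$; this is a direct computation relating $\ell^n$-th and $\ell^{n+1}$-th roots of a fixed $f$. Taking the inverse limit over $n$, and using $\G^a_K=\lim_{n\ra\infty}\G^a_K/\ell^n$ (an abelian pro-$\ell$-group, indeed a torsion-free $\Z_\ell$-module, hence $\ell$-adically complete), $\hat{K}^*=\lim_{n\ra\infty}K^*/(K^*)^{\ell^n}$ by definition, and $\Z_\ell(1)=\lim_{n\ra\infty}\boldsymbol{\mu}_{\ell^n}$, I obtain the extended pairing $[\,\cdot\,,\,\cdot\,]\colon\G^a_K\times\hat{K}^*\to\Z_\ell(1)$.

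It remains to see that the limit pairing is nondegenerate. If $\mu\in\G^a_K$ pairs trivially with all of $\hat{K}^*$, then, because each transition map in the system $\{K^*/(K^*)^{\ell^n}\}$ is surjective (so the projection $\hat{K}^*\to K^*/(K^*)^{\ell^n}$ is surjective), the image of $\mu$ in $\G^a_K/\ell^n$ pairs trivially with $K^*/(K^*)^{\ell^n}$ and therefore vanishes by perfectness of $[\,\cdot\,,\,\cdot\,]_n$; as this holds for all $n$ and $\G^a_K=\lim_{n\ra\infty}\G^a_K/\ell^n$, we get $\mu=0$. Conversely, if $\hat{f}=(f_n)_n\in\hat{K}^*$ pairs trivially with all of $\G^a_K$, then each $f_n\in K^*/(K^*)^{\ell^n}$ pairs trivially with $\G^a_K/\ell^n$, hence $f_n=0$ and $\hat{f}=0$. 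The only points requiring real care are the bookkeeping of the two inverse limits and the fact that perfectness survives the passage to the limit; the latter is precisely where the surjectivity of $\hat{K}^*\to K^*/(K^*)^{\ell^n}$ and the finiteness of the $\boldsymbol{\mu}_{\ell^n}$ enter, so the main (and rather modest) obstacle is this limit bookkeeping, everything else being classical Kummer theory.
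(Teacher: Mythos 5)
Your proof is correct. The paper states this theorem as a classical fact and offers no proof of its own, so there is nothing to compare against; what you have written is precisely the standard argument (finite-level Kummer pairing, descent to $\G^a_K/\ell^n$ via the universal property of the maximal abelian exponent-$\ell^n$ quotient, perfectness at each finite level, and passage to the inverse limit, using that the transition maps on the $K^*$-side are surjective and that $\bigcap_n \ell^n\G^a_K=0$). The only point worth tightening is your appeal to ``$\G^a_K=\lim_n\G^a_K/\ell^n$'': strictly speaking, what the nondegeneracy argument on the Galois side requires is only the injectivity of $\G^a_K\to\lim_n\G^a_K/\ell^n$, i.e.\ $\bigcap_n\ell^n\G^a_K=0$, which holds because $\G^a_K$ is a pro-$\ell$ group whose finite quotients separate points; surjectivity of that comparison map is irrelevant here and would need a separate justification.
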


Since $k$ is algebraically closed of characteristic $\neq \ell$ we can 
choose a noncanonical isomorphism 
$$
\Z_\ell\simeq \Z_{\ell}(1).
$$ 
From now on we will fix such a choice.

\section{Valuations}
\label{sect:val}

In this section we recall basic definitions and facts concerning valuations, 
and their inertia and decomposition subgroups of Galois groups (see \cite{bourb}).

\

A (nonarchimedean) {\em valuation} $\nu=(\nu,\Ga_{\nu})$ on $K$
is a pair consisting of a  totally ordered 
abelian group $\Ga_{\nu}=(\Gamma_\nu,+)$ (the {value} group) 
and a map
$$
\nu\,:\, K\ra \Ga_{\nu,\infty}:=\Gamma_\nu \cup \{ \infty\} 
$$
such that
\begin{itemize}
\item $\nu\,:\, K^*\ra \Ga_{\nu}$ is a surjective homomorphism;
\item $\nu(\kappa+\kappa')\ge 
\min(\nu(\kappa),\nu(\kappa'))$ for all $\kappa,\kappa'\in K$;
\item $\nu(0)=\infty$.
\end{itemize}
Every valuation of $K=k(X)$ restricts to a trivial valuation on $k=\overline{\F}_p$.

Let $\mo_{\nu}, \mm_{\nu}$ and $\KK_{\nu}$ 
be the ring of $\nu$-integers in $K$,
the maximal ideal of $\mo_{\nu}$ and the residue field
$$
\KK_{\nu}:=\mo_{\nu}/\mm_{\nu}.
$$
Basic invariants of valuations are:
the $\Q$-rank $\rk_{\Q}(\Gamma_{\nu})$ of the value group $\Gamma_{\nu}$ 
and the transcendence degree
$\trdeg_k(\KK_{\nu})$ of the residue field. We have:
\begin{equation}
\label{eqn:rank}
\rk_{\Q}(\Gamma_{\nu}) + \trdeg_k(\KK_{\nu}) \le \trdeg_k(K).
\end{equation}
A valuation on $K$ has an algebraic center 
$\mathfrak c_{\nu,X}$ on every model $X$ of $K$;
there exists a model $X$ 
where the dimension of $\mathfrak c_{\nu,X}$ is maximal, 
and equal to $\trdeg_{k}(\KK_{\nu})$.
A valuation $\nu$ is called {\em divisorial} if
$$
\trdeg_{k}(\KK_{\nu})=\dim(X)-1;
$$
it can be realized as the discrete rank-one valuation
arising from a divisor on some normal model $X$ of $K$.  
We let $\Val_K$ be the set 
of all nontrivial (nonarchimedean) valuations of $K$ and 
$\DVal_K$ the subset of its divisorial valuations.

It is useful to keep in mind the following exact sequences:
\begin{equation}
\label{eqn:1}
1\ra \mo_{\nu}^*\ra K^*\ra \Ga_{\nu}\ra 1
\end{equation}
and
\begin{equation}
\label{eqn:2}
1\ra (1+\mm_{\nu})^*\ra \mo_{\nu}^*\ra \KK_{\nu}^*\ra 1.
\end{equation}

For every $\nu\in \Val_K$ we have the diagram
$$
\begin{array}{ccccc}
\I^c_\nu & \subseteq & \D^c_{\nu}  & \subset & \G^c_K \\
\downarrow &       &  \downarrow &         & \downarrow \\
\I^a_\nu & \subseteq & \D^a_{\nu}  & \subset & \G^a_K,  
\end{array}
$$
where $\I^c_{\nu},\I^a_{\nu}, \D^c_{\nu}, \D^a_{\nu}$ are the 
images of the inertia and the decomposition group of
the valuation $\nu$ in $\G^c_K$, respectively, $\G^a_K$; 
the left arrow is an isomorphism and the other arrows surjections. 
There are canonical isomorphisms 
$$
\D^c_{\nu}/\I^c_{\nu}\simeq \G^c_{\KK_{\nu}} \quad \text{ and }  \quad \D^a_{\nu}/\I^a_{\nu}\simeq \G^a_{\KK_{\nu}}.
$$
The group $\D^c_{\nu}$ is the centralizer of $\I^c_{\nu}=\I^a_{\nu}$ in $\G^c_{\nu}$, i.e., 
$\I^a_{\nu}$ is the subgroup of elements forming a commuting pair with every element of $\D^a_{\nu}$.

For divisorial valuations $\nu\in \DVal_K$, we have
\begin{equation}
\label{eqn:div}
\I^c_{\nu}= \I^a_{\nu}\simeq \Z_{\ell}.
\end{equation}
Kummer theory, combined with equations \eqref{eqn:1} and \eqref{eqn:2} yields
\begin{equation}
\label{eqn:iav}
\I^a_{\nu}= \{ \gamma\in \Hom(K^*,\Z_{\ell})\,|\, \gamma \,\text{ trivial on } \,\mo_{\nu}^*\}=
\Hom(\Gamma_{\nu}, \Z_{\ell})
\end{equation}
and 
\begin{equation}
\label{eqn:dav}
\D^a_{\nu} = \{\gamma\in  \Hom(K^*,\Z_{\ell}) \,|\, \gamma  \,\text{ trivial on } \, (1+\mm_{\nu})^*\}.
\end{equation}
In particular, 
\begin{equation}
\label{eqn:rank-nu}
\rk_{\Z_{\ell}}(\I^a_{\nu})\le \rk_{\Q}(\Gamma_{\nu}) \le \trdeg_k(K).
\end{equation}
Two valuations $\nu_1,\nu_2$ are dependent
if there exists a common coarsening valuation $\nu$ 
(i.e., $\mm_{\nu}$ is contained in both $\mm_{\nu_1}, \mm_{\nu_2}$), 
in which case
$$
\D^a_{\nu_1}, \D^a_{\nu_2}\subset \D^a_{\nu}. 
$$ 
For independent valuations $\nu_1,\nu_2$ we have
$$
K^*=(1+\mm_{\nu_1})^*(1+\mm_{\nu_2})^*;
$$
it follows that their decomposition groups
have trivial intersection. 

In \cite[Proposition 6.4.1, Lemma 6.4.3 and Corollary 6.4.4]{BT} we proved:

\begin{prop}
\label{prop:lift}
Every topologically noncyclic liftable subgroup $\sigma$ of 
$\G^a_K$ contains a subgroup $\sigma'\subseteq \sigma$
such that there exists a valuation $\nu\in \Val_K$ with 
$$
\sigma'\subseteq \I^a_{\nu},  \quad \sigma\subseteq \D^a_{\nu},
$$
and $\sigma/\sigma'$ topologically cyclic. 
\end{prop}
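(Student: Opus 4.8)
The plan is to transport the statement to the multiplicative group via Kummer duality, to reinterpret liftability --- through the canonical central extension $\G^c_K$ and the Merkuriev--Suslin theorem --- as an algebraic dependence condition, and then to build $\nu$ from that condition, with an induction on transcendence degree controlling the residual cyclic part. So, first I would dualize: by Theorem~\ref{thm:ga} the pairing $\G^a_K\times\hat K^*\to\Z_\ell(1)$ is nondegenerate, hence $\G^a_K=\Hom(\hat K^*,\Z_\ell(1))$, and a closed subgroup $\sigma\subseteq\G^a_K$ has an orthogonal $\sigma^\perp\subseteq\hat K^*$ and a dual quotient $\hat K^*\twoheadrightarrow\hat K^*/\sigma^\perp=\Hom(\sigma,\Z_\ell(1))$. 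Since $k$ is algebraically closed, $k^*$ is $\ell$-divisible and invisible in $\hat K^*$, so everything is governed by $K^*/k^*$; when $\sigma$ has finite rank $r$, the quotient $\hat K^*/\sigma^\perp$ is free of rank $r$, generated by the images of finitely many nonconstant functions $g_1,\dots,g_r\in K^*/k^*$.

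Next comes the heart of the matter. The commutator in $\G^c_K$ gives a surjection $\wedge^2\G^a_K\to R:=[\G_K,\G_K]/[[\G_K,\G_K],\G_K]$ whose kernel, by the Hopf formula (five-term exact sequence), is the image of $\H_2(\G_K)$; Kummer theory together with the Merkuriev--Suslin theorem identifies the relevant part of this image with ${\rm K}^M_2(K)$ modulo infinitely divisible elements. Thus $\gamma,\gamma'\in\G^a_K$ form a commuting pair exactly when $\gamma\wedge\gamma'$ is orthogonal to all symbols $(f,g)$, $f,g\in K^*$, modulo infinitely divisible elements; by Lemma~\ref{lemm:milnor} this means precisely that the functions $g_1,\dots,g_r$ attached to a liftable $\sigma$ are pairwise algebraically dependent over $k$. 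Since $K^*/k^*$ is torsion free, no $g_i$ is constant; fixing $g:=g_1$, each $g_j$ is algebraic over $k(g)$, so all the $g_i$ lie in one subfield $E\subseteq K$ with $\trdeg_k(E)=1$ (it cannot be $0$, else $\sigma$ would be trivial, contradicting noncyclicity). By Lemmas~\ref{lemm:normal} and \ref{lemm:subf} we may take $E$ normally closed, with curve model $C$ and $E=k(C)$.

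Finally I would produce $\nu$. Through the restriction $\G^a_K\twoheadrightarrow\G^a_E$ the subgroup $\sigma$ is supported on $E=k(C)$; the divisors of $g_1,\dots,g_r$ on $C$ lie over a finite set of points, and, following \cite[\S6.4]{BT}, one converts this datum into a valuation $\nu\in\Val_K$ --- its valuation ring assembled from the subgroup $\langle g_1,\dots,g_r\rangle\subseteq K^*/k^*$ --- whose value group and residue field are so arranged that, by \eqref{eqn:iav} and \eqref{eqn:dav}, every element of $\sigma$ kills $(1+\mm_\nu)^*$, i.e.\ $\sigma\subseteq\D^a_\nu$, while $\sigma':=\sigma\cap\I^a_\nu$ (the part killing all of $\mo_\nu^*$) has corank at most one in $\sigma$. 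For this corank statement one uses the exact sequences \eqref{eqn:1}--\eqref{eqn:2} together with the isomorphism $\D^a_\nu/\I^a_\nu\simeq\G^a_{\KK_\nu}$: the image of $\sigma$ in $\G^a_{\KK_\nu}$ is again liftable (the central extension restricts to $\G^c_{\KK_\nu}$); it is automatically topologically cyclic when $\trdeg_k(\KK_\nu)\le 1$ (for a curve $\wedge^2\G^a\cong R$, so liftable forces cyclic), and when $\trdeg_k(\KK_\nu)\ge 2$ a topologically noncyclic image would, by the Proposition for the smaller field $\KK_\nu$ (induction on $\trdeg_k$), sit inside the decomposition group of a nontrivial valuation of $\KK_\nu$, whose composite with $\nu$ would be a strictly finer valuation of $K$ still containing $\sigma$ in its decomposition group --- impossible for the $\nu$ furnished by the construction. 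Either way $\sigma/\sigma'$ is topologically cyclic.

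The step I expect to be the real obstacle is this last construction: turning the group-theoretic fact ``$\gamma\wedge\gamma'\mapsto 0$ in $R$'' into an honest valuation realizing $\sigma'\subseteq\I^a_\nu$ and $\sigma\subseteq\D^a_\nu$ amounts to reconstructing a valuation ring from a finite-rank subgroup of $K^*/k^*$ and pinning down its value group and residue field; this, with the bookkeeping that leaves exactly a topologically cyclic quotient $\sigma/\sigma'$ and is compatible with composition of valuations in the induction, is where the substance of \cite{BT} lies.
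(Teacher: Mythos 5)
The paper does not prove Proposition~\ref{prop:lift} locally; it cites \cite[Proposition~6.4.1, Lemma~6.4.3, Corollary~6.4.4]{BT}. Still, your sketch contains a concrete gap that would sink any proof built along its lines, and it occurs \emph{before} the step you flag at the end as the real obstacle.

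The failing step is the claim that liftability of $\sigma$ forces the dual functions $g_1,\dots,g_r$ (generators of $\hat K^*/\sigma^\perp$) to be pairwise algebraically dependent. This is false. Take $K=k(x,y)$ and let $\nu$ be a flag valuation centered at a point, with local parameters $x,y$ and $\Gamma_\nu\simeq\Z^2$ ordered lexicographically. Then $\I^a_\nu\simeq\Hom(\Gamma_\nu,\Z_\ell)$ has $\Z_\ell$-rank two, is liftable (inertia is abelian since $\ell\neq p$), and lies in $\Sigma_K$ --- this is exactly what the last sentence of the proof of Lemma~\ref{lemm:sigma-struct} invokes. Under Kummer duality and \eqref{eqn:iav}, $\sigma^\perp=\hat{\mo}_\nu^*$ and $\hat K^*/\sigma^\perp\simeq\Gamma_\nu\otimes\Z_\ell$ is spanned precisely by the local parameters $x$ and $y$, which are algebraically \emph{independent}: $(x,y)\neq 0$ in ${\rm K}^M_2(K)$ modulo divisible elements (Lemma~\ref{lemm:milnor}). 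So your translation of the commuting-pair condition into pairwise algebraic dependence of the dual generators is wrong; the commutator pairing on $\wedge^2\G^a_K$ and the symbol map on $\wedge^2\hat K^*$ sit on opposite sides of Kummer duality and do not identify in the naive way your sketch assumes, and Lemma~\ref{lemm:milnor} is not applicable to the dual generators in that fashion. With this intermediate step gone, the remainder --- packing all the $g_i$ into a one-dimensional subfield $E=k(C)$ and extracting $\nu$ from the associated pencil --- targets the wrong object: the valuation required by the proposition can have rank up to $\trdeg_k K$ (as the flag example already shows) and is not produced by a single pencil. The genuinely hard content --- producing a valuation from a commuting pair --- is exactly the part of \cite{BT} that you acknowledge at the end but do not supply, and the reduction you propose before it does not reach the right starting point.
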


\begin{coro}
\label{coro:sigma-rank}
For every $\sigma\in \Sigma_K$ one has
$$
\rk_{\Z_{\ell}}(\sigma)\le \trdeg_k(K).
$$
\end{coro}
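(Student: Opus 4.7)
My plan is to deduce the corollary directly from Proposition~\ref{prop:lift} combined with the standard valuation-theoretic inequality \eqref{eqn:rank}.

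Since $\sigma \in \Sigma_K$ is by definition topologically noncyclic and liftable, Proposition~\ref{prop:lift} supplies a valuation $\nu \in \Val_K$ and a subgroup $\sigma' \subseteq \sigma$ with $\sigma' \subseteq \I^a_\nu$, $\sigma \subseteq \D^a_\nu$, and $\sigma/\sigma'$ topologically cyclic. The additivity of $\Z_\ell$-rank on short exact sequences of torsion-free modules then gives
$$
\rk_{\Z_\ell}(\sigma) \;\le\; \rk_{\Z_\ell}(\sigma') + 1 \;\le\; \rk_{\Z_\ell}(\I^a_\nu) + 1,
$$
and \eqref{eqn:rank-nu} bounds the right-hand side by $\rk_\Q(\Gamma_\nu) + 1$.

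Now I would invoke \eqref{eqn:rank} in two cases. If $\trdeg_k(\KK_\nu) \ge 1$, then $\rk_\Q(\Gamma_\nu) \le \trdeg_k(K) - 1$, so $\rk_{\Z_\ell}(\sigma) \le \trdeg_k(K)$ as desired. If $\trdeg_k(\KK_\nu) = 0$, then $\KK_\nu$ is algebraic over $k = \overline{\F}_p$, hence algebraically closed, so $\G^a_{\KK_\nu} = 0$. Via the canonical identification $\D^a_\nu/\I^a_\nu \simeq \G^a_{\KK_\nu}$, this forces the image of $\sigma$ in the quotient to vanish, i.e.\ $\sigma \subseteq \I^a_\nu$. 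In that case the spurious $+1$ disappears and $\rk_{\Z_\ell}(\sigma) \le \rk_{\Z_\ell}(\I^a_\nu) \le \rk_\Q(\Gamma_\nu) \le \trdeg_k(K)$ by \eqref{eqn:rank-nu} directly.

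The only real subtlety is the mild book-keeping in the case $\trdeg_k(\KK_\nu) = 0$, where the naive bound $\rk_\Q(\Gamma_\nu) + 1$ could a priori exceed $\trdeg_k(K)$; the observation that a trivial residual Galois group collapses $\sigma$ into $\I^a_\nu$ is what saves the estimate. Everything else is formal consequence of Proposition~\ref{prop:lift} and the basic inequalities recorded in Section~\ref{sect:val}.
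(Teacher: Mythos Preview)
Your argument is correct and follows essentially the same route as the paper: invoke Proposition~\ref{prop:lift}, bound $\rk_{\Z_\ell}(\I^a_\nu)$ via \eqref{eqn:rank-nu}, and use the Abhyankar-type inequality \eqref{eqn:rank} to absorb the extra $+1$ coming from the cyclic quotient $\sigma/\sigma'$. The only cosmetic difference is that the paper splits on whether $\sigma=\sigma'$ (and then argues that $\sigma\neq\sigma'$ forces $\trdeg_k(\KK_\nu)\ge 1$), whereas you split directly on $\trdeg_k(\KK_\nu)$ and observe that $\trdeg_k(\KK_\nu)=0$ forces $\D^a_\nu=\I^a_\nu$, hence $\sigma\subseteq\I^a_\nu$; the two dichotomies are equivalent and your formulation is arguably a bit cleaner.
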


\begin{proof}
By \eqref{eqn:rank-nu}, 
$$
\rk_{\Z_{\ell}}(\I^a_{\nu})\le \trdeg_k(K).
$$
We are done if $\sigma=\sigma'$. Otherwise, $\D^a_{\nu}/\I^a_{\nu}$ is nontrivial 
and $\trdeg_k(\KK_{\nu})\ge 1$. In this case, \eqref{eqn:rank-nu} and \eqref{eqn:rank} yield that 
$$
\rk_{\Z_{\ell}}(\sigma')\le \trdeg_k(K)-1,
$$ 
and the claim follows.
\end{proof}

\begin{coro}
\label{coro:intersect}
Assume that for $\sigma_1,\sigma_2\in \Sigma_K$ one has 
$$
\sigma_1\cap \sigma_2\neq 0. 
$$
Then there exists a valuation $\nu\in \Val_K$ such that 
$$
\sigma_1, \sigma_2\subset \D^a_{\nu}.
$$
\end{coro}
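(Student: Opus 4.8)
The plan is to realize each $\sigma_i$ inside the decomposition group of a valuation via Proposition~\ref{prop:lift}, and then to invoke the dependent/independent dichotomy for valuations: the hypothesis $\sigma_1\cap\sigma_2\neq 0$ is exactly what excludes the independent case.

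First I would observe that an element of $\Sigma_K$ is, in particular, a topologically noncyclic liftable subgroup of $\G^a_K$, so Proposition~\ref{prop:lift} applies to each of $\sigma_1,\sigma_2$ and produces valuations $\nu_1,\nu_2\in\Val_K$ with $\sigma_i\subseteq\D^a_{\nu_i}$ for $i=1,2$ (the finer statement $\sigma_i'\subseteq\I^a_{\nu_i}$ is not needed here). Then $0\neq\sigma_1\cap\sigma_2\subseteq\D^a_{\nu_1}\cap\D^a_{\nu_2}$, so the two decomposition subgroups intersect nontrivially.

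Next I would show this forces $\nu_1$ and $\nu_2$ to be dependent. If they were independent, then $K^*=(1+\mm_{\nu_1})^*(1+\mm_{\nu_2})^*$; but by the description~\eqref{eqn:dav}, any element of $\D^a_{\nu_1}\cap\D^a_{\nu_2}$ is a homomorphism $K^*\to\Z_\ell$ trivial on both $(1+\mm_{\nu_1})^*$ and $(1+\mm_{\nu_2})^*$, hence trivial on all of $K^*$, so that $\D^a_{\nu_1}\cap\D^a_{\nu_2}=0$ --- contradicting the previous step. Hence $\nu_1$ and $\nu_2$ are dependent, so there is a common coarsening $\nu\in\Val_K$ with $\D^a_{\nu_1},\D^a_{\nu_2}\subseteq\D^a_\nu$; combining with the first step, $\sigma_1,\sigma_2\subseteq\D^a_\nu$, which is the claim.

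I do not expect a genuine obstacle here: the substantive input is entirely Proposition~\ref{prop:lift} together with the elementary facts on dependent and independent valuations recalled above. The only point deserving care is the step ``independent $\Rightarrow$ trivial intersection of decomposition groups'': it is its contrapositive that is used, converting $\sigma_1\cap\sigma_2\neq 0$ into dependence of $\nu_1,\nu_2$, and one should keep in mind that, for a pair of valuations, ``dependent'' is precisely the negation of ``independent'', so that the common coarsening is indeed available.
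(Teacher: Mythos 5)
Your proof is correct and follows the same route the paper intends: invoke Proposition~\ref{prop:lift} to place each $\sigma_i$ inside some $\D^a_{\nu_i}$, observe that the nontrivial intersection rules out independence of $\nu_1,\nu_2$ (since independent valuations have decomposition groups with trivial intersection), and take the resulting common coarsening. You have simply spelled out the reasoning that the paper compresses into the two sentences ``The valuations cannot be independent. Thus there exists a common coarsening.''
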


\begin{proof}
The valuations cannot be independent. Thus there exists a common coarsening.
\end{proof}

This allows to recover the abelianized decomposition and inertia groups of valuations 
in terms of $\Sigma_K$. 
Here is one possible description for divisorial valuations, a straightforward
generalization of the two-dimensional case treated in \cite[Proposition 8.3]{bt0}:

\begin{lemm}
\label{lemm:sigma-struct}
Let $K=k(X)$ be the function field of an algebraic variety of dimension $n\ge 2$. 
Let $\sigma_1,\sigma_2\in \Sigma_K$ be liftable subgroups of rank $n$ such that 
$\I:=\sigma_1\cap \sigma_2$ is topologically cyclic. Then there exists a unique divisorial valuation 
$\nu$ such that $\I=\I^a_{\nu}$. The corresponding decomposition group $\D^a_{\nu}\subset \G^a_K$ is the 
subgroup of elements forming a commuting pair with a topological generator of $\I^a_{\nu}$. 
\end{lemm}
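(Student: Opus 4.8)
The plan is to follow the argument of \cite[Proposition 8.3]{bt0}: produce a valuation with $\sigma_1,\sigma_2$ in its decomposition group, identify its inertia with $\I$ using maximality of the $\sigma_i$, force this valuation to be divisorial by a rank count built on the hypothesis $\rk\sigma_i=n$, and then deduce uniqueness and the description of $\D^a_\nu$ from Kummer theory and the centrality of inertia in the decomposition group.

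First, since $\I=\sigma_1\cap\sigma_2\neq 0$, Corollary~\ref{coro:intersect} supplies a valuation $\nu\in\Val_K$ with $\sigma_1,\sigma_2\subseteq\D^a_\nu$. I would then show $\I^a_\nu\subseteq\sigma_i$ for $i=1,2$. The canonical map $\I^c_\nu\to\I^a_\nu$ is an isomorphism, so $\I^c_\nu$ is abelian and exhibits $\I^a_\nu$ as a liftable subgroup; moreover every element of $\I^a_\nu$ forms a commuting pair with every element of $\D^a_\nu$, because $\D^c_\nu$ is the centralizer of $\I^c_\nu$ in $\G^c_K$. Since the commutator pairing $\G^a_K\times\G^a_K\to\ker(\pr)$ is bilinear, it vanishes on $\langle\sigma_i,\I^a_\nu\rangle$, so this subgroup is again liftable; maximality of $\sigma_i$ then gives $\langle\sigma_i,\I^a_\nu\rangle=\sigma_i$, hence $\I^a_\nu\subseteq\sigma_i$ and therefore $\I^a_\nu\subseteq\sigma_1\cap\sigma_2=\I$.

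Next comes the rank count. From $\I^a_\nu\subseteq\sigma_i$ and the exact sequence $0\to\I^a_\nu\to\sigma_i\to\bar\sigma_i\to 0$, where $\bar\sigma_i$ is the image of $\sigma_i$ in $\D^a_\nu/\I^a_\nu\simeq\G^a_{\KK_\nu}$, we get $n=\rk\sigma_i=\rk\I^a_\nu+\rk\bar\sigma_i$. Here $\bar\sigma_i$ is a liftable subgroup of $\G^a_{\KK_\nu}$ (the image of a lift of $\sigma_i$, which lies in $\D^c_\nu$ and maps to an abelian subgroup under $\D^c_\nu/\I^c_\nu\simeq\G^c_{\KK_\nu}$), so $\rk\bar\sigma_i\le\trdeg_k\KK_\nu$ — trivial if $\bar\sigma_i$ is topologically cyclic, and otherwise by the argument proving Corollary~\ref{coro:sigma-rank}, applied to $\G^a_{\KK_\nu}$ via Proposition~\ref{prop:lift}. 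Combined with $\rk\I^a_\nu\le\rk_\Q\Gamma_\nu$ (by~\eqref{eqn:iav}) and $\rk_\Q\Gamma_\nu+\trdeg_k\KK_\nu\le n$ (by~\eqref{eqn:rank}), all these inequalities are forced to be equalities. Since $\nu$ is nontrivial, $\rk_\Q\Gamma_\nu\ge 1$; as $\rk\I^a_\nu=\rk_\Q\Gamma_\nu$ and $\I^a_\nu\subseteq\I$ is topologically cyclic, we conclude $\rk_\Q\Gamma_\nu=1$ and $\trdeg_k\KK_\nu=n-1=\dim(X)-1$, i.e.\ $\nu$ is divisorial, with $\I^a_\nu\simeq\Z_\ell$ by~\eqref{eqn:div}. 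Finally $\I/\I^a_\nu$ embeds into $\D^a_\nu/\I^a_\nu\simeq\G^a_{\KK_\nu}$, which is torsion-free, and has $\Z_\ell$-rank zero, so $\I=\I^a_\nu$; this proves existence.

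For uniqueness, if $\nu,\nu'$ are divisorial with $\I^a_\nu=\I=\I^a_{\nu'}$, then by~\eqref{eqn:iav} and nondegeneracy of the Kummer pairing the group $\mo_\nu^*=\{f\in K^*\mid [\gamma,f]=0\ \text{for all}\ \gamma\in\I\}$ is recovered from $\I$ alone; it determines the valuation ring $\mo_\nu$, hence $\nu$, so $\nu=\nu'$. For the last assertion: centrality of $\I^c_\nu$ in $\D^c_\nu$ shows every $\gamma\in\D^a_\nu$ forms a commuting pair with a topological generator $\delta$ of $\I^a_\nu$; conversely, if $\gamma$ forms a commuting pair with $\delta$, a lift $\tilde\gamma\in\G^c_K$ commutes with the corresponding topological generator of $\I^c_\nu\simeq\Z_\ell$, hence with all of $\I^c_\nu$, so $\tilde\gamma\in\Cent(\I^c_\nu)=\D^c_\nu$ and $\gamma\in\D^a_\nu$. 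I expect the main work to lie in the rank count: checking that $\bar\sigma_i$ is genuinely liftable in $\G^a_{\KK_\nu}$, that $\D^a_\nu\to\G^a_{\KK_\nu}$ has kernel exactly $\I^a_\nu$ with the quoted residue-field identifications, and that the liftable-rank bound is available for the residue field $\KK_\nu$ — that is, the bookkeeping around passing to residue fields, rather than any single deep point.
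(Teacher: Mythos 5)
Your proof is correct and follows essentially the same route as the paper's: invoke Corollary~\ref{coro:intersect} to get a valuation $\nu$ with $\sigma_1,\sigma_2\subseteq\D^a_\nu$, use maximality (via bilinearity of the commutator pairing) to force $\I^a_\nu\subseteq\I$, then run the rank count with \eqref{eqn:rank} and \eqref{eqn:rank-nu} to see $\rk_\Q\Gamma_\nu=1$ and $\trdeg_k\KK_\nu=n-1$. The paper packages the same count slightly differently (it first rules out $\I^a_\nu=0$ by contradiction, then applies Corollary~\ref{coro:sigma-rank} to $\bar\sigma_j$ of rank $n-1$), but the content is identical. Your write-up actually goes further than the paper's terse proof: you explicitly close the gap from $\I^a_\nu\subseteq\I$ to $\I=\I^a_\nu$ (torsion-freeness of $\G^a_{\KK_\nu}$), you spell out uniqueness via recovering $\mo_\nu^*$ as the annihilator of $\I$ under the Kummer pairing \eqref{eqn:iav}, and you derive the description of $\D^a_\nu$ from centrality of $\I^c_\nu$ in $\D^c_\nu$ — all three of which the paper leaves implicit. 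Those additions are worthwhile, not redundant.
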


\begin{proof}
Let $\nu_1,\nu_2\in \Val_K$ be the valuations associated to 
$\sigma_1,\sigma_2$ in Proposition~\ref{prop:lift}. By Corollary~\ref{coro:intersect}, there
exists a valuation $\nu\in \Val_K$ such that 
$$
\sigma_j\subset \D^a_{\nu_j}\subset \D^a_{\nu}, \quad \text{ for } \quad  j=1,2.
$$
Let $\I^a_{\nu}$ be the corresponding inertia subgroup, 
the subgroup of elements commuting with all of $\D^a_{\nu}$. 
In particular,  $\I^a_{\nu}$ commutes with 
all elements of $\sigma_1$ and $\sigma_2$. Since $\sigma_1,\sigma_2$ 
are maximal liftable subgroups of $\G^a_K$, we obtain that 
$$
\I^a_{\nu}\subseteq \sigma_1\cap \sigma_2=\I\simeq \Z_{\ell}.   
$$
Note that $\I^a_{\nu}$ cannot be trivial; otherwise, the residue field $\KK_{\nu}$ would 
contain a liftable subgroup of rank $n$, and have transcendence degree $n$,   
by Corollary~\ref{coro:sigma-rank}, which is impossible. 
It follows that $\rk_{\Z_{\ell}}(\I^a_{\nu})=1$ and $\trdeg_k(\KK_{\nu})\le n-1$. 

Now we apply Corollary~\ref{coro:sigma-rank} to  
$$
\bar{\sigma}_j:=\sigma_j/\I^a_{\nu}\subset \G^a_{\KK_{\nu}}, \quad \text{ for }  \quad j=1,2,
$$ 
liftable subgroups  of rank $n-1$. It follows that $\trdeg_k(\KK_{\nu})\ge n-1$, thus 
equal to $n-1$, i.e., $\nu$ is a divisorial valuation.

Conversely, an inertia subgroup $\I^a_{\nu}$ can be embedded into maximal liftable subgroups
$\sigma_1,\sigma_2$ as above, e.g., 
by considering ``flag'' valuation with value group $\Z^n$, with disjoint centers 
supported on the corresponding divisor $D=D_{\nu}\subset X$.  
\end{proof}

The following is useful for the visualization of composite valuations: 

\begin{lemm}
\label{lemm:liftable}
Let $\nu\in \DVal_K$ be a divisorial valuation. 
There is a bijection between 
liftable subgroups $\sigma\in \Sigma_K$ with the property that 
$$
\I^a_{\nu}\subset \sigma \subseteq \D^a_{\nu} 
$$
and liftable subgroups $\sigma_{\nu}\in \Sigma_{\KK_{\nu}}$. 
\end{lemm}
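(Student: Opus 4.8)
The plan is to exploit the canonical isomorphism $\D^a_\nu/\I^a_\nu\simeq\G^a_{\KK_\nu}$ recalled in Section~\ref{sect:val}, and to check that it matches liftable subgroups on the two sides. To a subgroup $\sigma$ with $\I^a_\nu\subseteq\sigma\subseteq\D^a_\nu$ I associate $\bar\sigma:=\sigma/\I^a_\nu\subseteq\G^a_{\KK_\nu}$, and to a subgroup $\sigma_\nu\subseteq\G^a_{\KK_\nu}$ its full preimage in $\D^a_\nu$ under $\D^a_\nu\twoheadrightarrow\D^a_\nu/\I^a_\nu$; these two operations are mutually inverse and inclusion-preserving between the poset of subgroups of $\G^a_K$ lying between $\I^a_\nu$ and $\D^a_\nu$ and the poset of all subgroups of $\G^a_{\KK_\nu}$. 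It then suffices to establish: (i) a subgroup $\sigma$ in that range is liftable in $\G^a_K$ if and only if $\bar\sigma$ is liftable in $\G^a_{\KK_\nu}$; (ii) every liftable subgroup of $\G^a_K$ containing $\I^a_\nu$ is automatically contained in $\D^a_\nu$; and (iii) the maximality and topologically noncyclic conditions transfer across the correspondence.

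For (i) I would use the commutative square relating the canonical central extensions $\pr\colon\G^c_K\to\G^a_K$ and $\pr\colon\G^c_{\KK_\nu}\to\G^a_{\KK_\nu}$, together with the identification $\G^c_{\KK_\nu}\simeq\D^c_\nu/\I^c_\nu$ and the fact (recalled in Section~\ref{sect:val}) that $\pr$ restricts to an isomorphism $\I^c_\nu\cong\I^a_\nu$; in particular $\I^c_\nu\cap[\G^c_K,\G^c_K]=0$. Given $\gamma_1,\gamma_2\in\sigma$, choose lifts $\tilde\gamma_1,\tilde\gamma_2\in\D^c_\nu$; their images in $\G^c_{\KK_\nu}$ are lifts of $\bar\gamma_1,\bar\gamma_2\in\G^a_{\KK_\nu}$, and the commutator of these images equals the image of $[\tilde\gamma_1,\tilde\gamma_2]$ under $\D^c_\nu\to\D^c_\nu/\I^c_\nu$. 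Since in the two-step nilpotent group $\G^c_K$ the commutator $[\tilde\gamma_1,\tilde\gamma_2]$ is independent of the chosen lifts, liftability of $\sigma$ makes this image vanish, so $\bar\sigma$ is liftable; conversely, if $\bar\sigma$ is liftable then $[\tilde\gamma_1,\tilde\gamma_2]$ lies in $\I^c_\nu$, and as it also lies in $[\G^c_K,\G^c_K]$ it must be trivial, so $\sigma$ is liftable. For (ii) I would use that $\D^c_\nu$ is the centralizer of $\I^c_\nu$ in $\G^c_K$; by the same independence of the commutator this says precisely that $\D^a_\nu$ is the set of elements of $\G^a_K$ forming a commuting pair with every element of $\I^a_\nu$, so any liftable $\tau\supseteq\I^a_\nu$ satisfies $\tau\subseteq\D^a_\nu$. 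Combining (i) and (ii), the correspondence restricts to an inclusion-preserving bijection between liftable subgroups of $\G^a_K$ containing $\I^a_\nu$ and liftable subgroups of $\G^a_{\KK_\nu}$; moreover, by (ii) such a liftable subgroup of $\G^a_K$ is a maximal liftable subgroup of $\G^a_K$ exactly when it is maximal in this poset, so the bijection carries maximal liftable subgroups to maximal liftable subgroups in both directions.

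It remains to match the topologically noncyclic condition, and this is the point I expect to be the main obstacle. Since $\nu$ is divisorial, $\I^a_\nu\simeq\Z_\ell$; hence if $\sigma_\nu=\bar\sigma$ is noncyclic then $\sigma$, which surjects onto it, is noncyclic as well. The reverse implication is the delicate one: one must show that for $\sigma\in\Sigma_K$ with $\I^a_\nu\subset\sigma\subseteq\D^a_\nu$ the quotient $\sigma/\I^a_\nu$ is noncyclic. By the bijection established above $\sigma/\I^a_\nu$ is a maximal liftable subgroup of $\G^a_{\KK_\nu}$, so the assertion reduces to the statement that a maximal liftable subgroup of the abelianized pro-$\ell$ Galois group of a function field of transcendence degree $\ge 2$ over $\ovl{\F}_p$ is never topologically cyclic, i.e.\ that every procyclic subgroup there is properly contained in a liftable subgroup. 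In the setting in which this lemma is applied $\trdeg_k(\KK_\nu)=\dim(X)-1\ge 2$, and the required non-cyclicity follows from the theory of commuting pairs of \cite{BT}: were $\sigma/\I^a_\nu$ procyclic, then $\sigma$ would have $\Z_\ell$-rank two and could be enlarged by adjoining a suitable element of a flag inertia subgroup over a divisor of a model of $\KK_\nu$, as in the converse part of Lemma~\ref{lemm:sigma-struct}, contradicting the maximality of $\sigma$. Granting this non-cyclicity, the bijection of (i)--(ii) restricts to the asserted bijection between the $\sigma\in\Sigma_K$ with $\I^a_\nu\subset\sigma\subseteq\D^a_\nu$ and $\Sigma_{\KK_\nu}$; steps (i) and (ii) are purely formal consequences of the structure of the canonical central extensions and of the equality $\I^c_\nu=\I^a_\nu$.
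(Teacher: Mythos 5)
Your proof takes the same core approach as the paper's: the essential input is the centralizer characterization of $\D^a_{\nu}$ (the fact, cited in the paper as \cite[Corollary 8.2]{bt0}, that any $\gamma$ forming a commuting pair with a nontrivial inertia element lies in $\D^a_{\nu}$), from which the correspondence $\sigma\mapsto\sigma/\I^a_{\nu}$ is shown to be a bijection. The paper's proof consists essentially of this one citation; everything else is left implicit. Your write-up is the careful fleshing-out of this: your step (i), using $\I^c_{\nu}\cap[\G^c_K,\G^c_K]=0$ (which follows from $\I^c_{\nu}\xrightarrow{\sim}\I^a_{\nu}$) together with the lift-independence of commutators in the $2$-step nilpotent group $\G^c_K$, is exactly the right way to transfer liftability across $\D^c_{\nu}/\I^c_{\nu}\simeq\G^c_{\KK_{\nu}}$, and your step (ii) is the cited centralizer fact.

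The one point where your account is more honest than the paper's, but also less than airtight, is the transfer of the topologically-noncyclic condition. You correctly observe that the problematic direction is showing $\sigma/\I^a_{\nu}$ cannot be procyclic, and you reduce this to the statement that a maximal liftable subgroup of $\G^a_{\KK_{\nu}}$ is never cyclic when $\trdeg_k(\KK_{\nu})\ge 2$. Your appeal to the flag-valuation construction from the converse part of Lemma~\ref{lemm:sigma-struct} to enlarge a procyclic $\bar\sigma$ is plausible but is stated at the level of an expectation rather than a proof: to make it precise one would need to know that the given topological generator $\bar\gamma$ actually forms a commuting pair with some inertia element of a divisorial (or flag) valuation of $\KK_{\nu}$, which is not automatic from what you have written. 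In the intended range of application ($\dim X\ge 3$, so $\trdeg_k(\KK_{\nu})\ge 2$) this is presumably covered by the commuting-pairs machinery of \cite{BT}, and the paper clearly treats it as known; but since you flagged it yourself as ``the point I expect to be the main obstacle,'' it deserves an explicit reference rather than the heuristic you give. Apart from this, your argument is correct and is, in effect, the detailed version of the paper's one-line proof.
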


\begin{proof}
We apply \cite[Corollary 8.2]{bt0}:
let $\nu$ be a valuation of $K$ and $\iota_{\nu}\in \I^a_{\nu}$. Let 
$\gamma\in \G^a_K$ be such that $\iota_{\nu}$ and $\gamma$ form a commuting pair. 
Then $\gamma\in \D^a_{\nu}$. 
\end{proof}

\

In summary, under the assumptions of Theorem~\ref{thm:main}, we have obtained:
\begin{itemize}
\item a canonical isomorphism of completions 
$\Psi^*\,:\, \hat{L}^*\stackrel{\sim}{\lra} \hat{K}^*$ induced, by 
Kummer theory, from the isomorphism $\Psi\,:\, \G^a_K\stackrel{\sim}{\lra} \G^a_L$;
\item a bijection on the set of inertia (and decomposition) subgroups of divisorial valuations 
$$
\G^a_K\supset \I^a_{\nu} \stackrel{\Psi}{\longrightarrow} \I^a_{\nu} \subset \G^a_L.
$$
\end{itemize}
Note that $K^*/k^*\subset \hat{K}^* $ determines a canonical
topological generator $\delta_{\nu,K}\in \I^a_{\nu}$, for all $\nu\in \DVal_K$,
by the condition that the restriction takes values in the integers 
$$
\delta_{\nu,K} \,:\, K^*/k^* \ra \Z\subset \Z_{\ell}
$$
i.e., that there exist elements $f\in K^*/k^*$ such that $\delta_{\nu,K}(f)=1$.  
A  topological generator of the procyclic group $\I^a_{\nu}\simeq \Z_{\ell}$ 
is defined up to the action of $\Z_{\ell}^*$.
We conclude that there exist constants 
$$
\varepsilon_{\nu}\in \Z_{\ell}^*, \quad  \nu\in \DVal_K=\DVal_L
$$
such that 
\begin{equation}
\label{eqn:epsilon}
\Psi(\delta_{\nu,K}) =\varepsilon_{\nu} \cdot \delta_{\nu,L},\quad \forall\,  \nu\in \DVal_K. 
\end{equation} 
The main difficulty is to show that there exists a conformally
{\em unique} $\Z_{(\ell)}$-lattice, i.e., a constant $\epsilon\in \Z_{\ell}^*$, unique
modulo $\Z_{(\ell)}^*$, such that
$$
\varepsilon_{\nu}=\epsilon, \quad \forall \nu\in \DVal_K.
$$
A proof of this fact will be carried out in Section~\ref{sect:1-dim}. 

\

Let $\nu$ be a divisorial valuation. Passing to $\ell$-adic 
completions in sequence \eqref{eqn:1}
we obtain an exact sequence
$$
1\ra \hat{\mo}_{\nu}^*\ra \hat{K}^*\stackrel{\nu}{\longrightarrow} \Z_{\ell}\ra 0.
$$
The sequence  \eqref{eqn:2}
gives rise to a surjective homomorphism
$$
\hat{\mo}_{\nu}\ra \hat{\KK}^*_{\nu}. 
$$
Combining these, we obtain
a surjective homomorphism 
\begin{equation}
\label{eqn:residue-field}
\res_{\nu}\,:\, \Ker(\nu)\ra \hat{\KK}^*_{\nu}. 
\end{equation}
This homomorphism has a Galois-theoretic description, 
via duality arising from Kummer theory:  
We have
$$
\I^a_{\nu} \subset \D^a_{\nu}\subset \G^a_K,
$$
and
$$
\hat{\KK}^*_{\nu}=\Hom(\G^a_{\KK_{\nu}},\Z_{\ell}) = \Hom(\D^a_{\nu}/\I^a_{\nu}, \Z_{\ell});
$$
each $\hat{f}\in \Ker(\nu)\subset \hat{K}^* = \Hom(\G^a_K, \Z_{\ell})$ 
gives rise to a well-defined element in $\Hom(\D^a_{\nu}/\I^a_{\nu}, \Z_{\ell})$.

\section{$\ell$-adic analysis: generalities}
\label{sect:ella}

Here we recall the main issues arising in the analysis of $\ell$-adic completions
of functions, divisors, and Picard groups of normal projective models
of function fields $K=k(X)$ (see \cite[Section 11]{bt0} for more details).  

\

We have an exact sequence 
\begin{equation}
\label{eqn:seqq}
0\ra K^*/k^*\stackrel{\dv_X}{\lra} \Div(X)\stackrel{\pic}{\lra} \Pic(X)\ra 0, 
\end{equation}
where $\Div(X)$ is the group of (locally principal) Weil divisors of $X$ and $\Pic(X)$ is the 
Picard group.
We will identify an element $f\in K^*/k^*$ with its image under $\dv_X$.  Let 
$$
\widehat{\Div}(X)
$$ 
be the pro-$\ell$-completion of $\Div(X)$ and  put
$$
\Div(X)_{\ell}:=\Div(X)\otimes_{\Z}\Z_{\ell}\subset \widehat{\Div}(X).
$$
Every element $\hat{f}\in \hat{K}^*$ has a  representation
$$
\hat{f}=(f_n)_{n\in \N} 
\,\, {\rm or }\,\, f=f_0f_1^{\ell}f_2^{\ell^2}\cdots ,
$$
with $f_n\in K^*$. 
We have homomorphisms 
$$
\begin{array}{rcl}
\dv_{X} \,: \,\hat{K}^* & \ra  & \widehat{\Div}(X),\\
\hat{f}   \,\,\,\,               & \mapsto &  
\dv_X(\hat{f}):=\sum_{n\in \N}\ell^n\cdot\dv_X(f_n)=
\sum_{m}\hat{a}_m D_m, \\
\end{array}
$$
where $D_m\subset X$ are irreducible divisors, 
$$
\hat{a}_m=\sum_{n\in \N} a_{nm}\ell^n\in \Z_{\ell}, \quad a_{nm}\in \Z.
$$

Equation~\eqref{eqn:seqq} gives rise to an exact sequence
\begin{equation}
\label{eqn:seqq-times}
0\ra K^*/k^*\otimes \Z_{\ell}\stackrel{\dv_{X}}{\lra} \Div^0(X)_{\ell}\stackrel{\pic_{\ell}}{\lra} 
\Pic^0(X)\{\ell\}\ra 0, 
\end{equation}
where 
$$
\Div^0(X)_{\ell}:=\Div(X)^0\otimes \Z_{\ell}, \quad \text{ and } \quad 
\Pic^0(X)\{\ell\}=\Pic^0(X)\otimes \Z_{\ell}
$$ 
is the $\ell$-primary component of the torsion group $\Pic^0(X)$. 
The assignment
$$
\cT_{\ell}(X):=\lim_{\longleftarrow}{\rm Tor}_1(\Z/\ell^n,\Pic^0(X)\{\ell\}).
$$ 
is functorial:
\begin{equation}
\label{eqn:tl-funct}
Y\ra X \quad \Rightarrow \quad \cT_{\ell}(X)\ra \cT_{\ell}(Y).
\end{equation}
We have $\cT_{\ell}(X)\simeq \Z_{\ell}^{2\mathsf g}$, 
where $\mathsf g$ is the dimension of $\Pic^0(X)$.
Passing to pro-$\ell$-completions in \eqref{eqn:seqq-times} we obtain an exact sequence:
\begin{equation}
\label{eqn:seqq-pro-ell}
0\ra \cT_{\ell}(X)\ra \hat{K}^*\stackrel{\dv_X}{\lra} \widehat{\Div^0}(X) \lra 0, 
\end{equation}
since $\Pic^0(X)$ is an $\ell$-divisible group. 
Note that all groups in this sequence are torsion-free. 
We have a diagram   
\begin{equation}
\label{eqn:dia-need}
\begin{array}{ccccccccccc}
  &     &    0         & \ra & K^*/k^*\otimes\Z_{\ell} &  \stackrel{\dv_{X}}{\lra}   &   \Div^0(X)_{\ell}  &\stackrel{\pic_{\ell}}{\lra}  & \Pic^0(X)\{\ell\}  & \ra &  0\\ 
  &     &              &     &   \downarrow             &                 &   \downarrow      &     &      \downarrow &     &   \\
0 & \ra &  \cT_{\ell}(X)    & \ra & \hat{K}^*  & \stackrel{\dv_{X}}{\lra} & \widehat{\Div^0}(X)& 
\stackrel{\hat{\pic}}{\lra} & 0.  & 
\end{array}
\end{equation}

Every $\nu\in \DVal_K$ gives rise to a
homomorphism
$$
\nu\,:\, \hat{K}^*\ra \Z_{\ell}.
$$
On a normal model $X$, where $\nu=\nu_D$ for some divisor $D\subset X$, 
$\nu(\hat{f})$ is the $\ell$-adic coefficient at $D$ of $\dv(\hat{f})$.

The following lemma generalizes \cite[Lemmas 11.12 and 11.14]{bt0} 
to normal varieties.

\begin{lemm}
\label{lemm:alba}
Let $K$ be a function field over $k$ of transcendence degree $\ge 3$. 
Then there exists a normal projective model 
$X$ of $K$ such that 
for all birational maps $\tilde{X}\ra X$ 
from a normal variety $\tilde{X}$
one has a canonical isomorphism
$$
\cT_{\ell}(X)\ra \cT_{\ell}(\tilde{X}).
$$
In particular, $\cT_{\ell}(X)$ is an invariant of $K$. 
Moreover, we have
\begin{equation}
\label{eqn:ctl}
\cT_{\ell}(X)=\cT_{\ell}(K)=\cap_{\nu\in \DVal_K} \Ker(\hat{\nu}) \subset
\hat{K}^*.
\end{equation}
\end{lemm}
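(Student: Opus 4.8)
The plan is to reduce everything to the identification of $\cT_\ell(X)$, furnished by the exact sequence \eqref{eqn:seqq-pro-ell}, with the subgroup
$$
\cT_\ell(X)=\Ker\big(\dv_X\colon \hat{K}^*\longrightarrow \widehat{\Div}(X)\big)\subset \hat{K}^*,
$$
and then to prove that this subgroup is independent of the normal projective model. By Lemma~\ref{lemm:normal} normal projective models exist, and any two of them are dominated by a third — e.g. the normalization of the closure of the graph of the birational map between them — so it suffices to treat a single birational morphism $f\colon \tilde{X}\to X$ of normal projective models and to show that $\cT_\ell(X)=\cT_\ell(\tilde{X})$ inside $\hat{K}^*$, compatibly with the functorial map \eqref{eqn:tl-funct}.

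The key observation is that, by the standard behaviour of principal divisors under pullback and proper pushforward of Cartier divisors along $f$, one has $\dv_{\tilde{X}}=f^{*}\circ\dv_X$ and $\dv_X=f_{*}\circ\dv_{\tilde{X}}$ on $K^*/k^*$, hence by continuity (writing $\hat{f}=(f_n)$ and $\dv_X(\hat{f})=\sum_n\ell^n\dv_X(f_n)$) also on all of $\hat{K}^*$. Applying $f_*$ yields $\Ker(\dv_{\tilde{X}})\subseteq\Ker(\dv_X)$, and applying $f^{*}$ yields the reverse inclusion, so the two kernels coincide. Since the diagram \eqref{eqn:dia-need} is functorial in the model and the maps it induces on $K^*/k^*$ and on $\hat{K}^*$ are identities (the function field is unchanged), this common subgroup, under the identification \eqref{eqn:seqq-pro-ell}, is carried to itself by the functorial isomorphism \eqref{eqn:tl-funct}; in particular the isomorphism $\cT_\ell(X)\to\cT_\ell(\tilde{X})$ is canonical.

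It remains to prove \eqref{eqn:ctl}. On any normal model $X$ one has $\cT_\ell(X)=\Ker(\dv_X)=\bigcap_{D\subset X}\Ker(\hat{\nu}_D)$, the intersection taken over the irreducible divisors $D$ of $X$, whence $\cT_\ell(X)\supseteq\bigcap_{\nu\in\DVal_K}\Ker(\hat{\nu})$. For the reverse inclusion, fix $\nu\in\DVal_K$; by definition $\nu=\nu_{D'}$ for a divisor $D'$ on some normal model $X'$, and dominating both $X'$ and $X$ by a common normal projective model $\tilde{X}$ with a birational morphism $\tilde{X}\to X$, the center of $\nu$ on $\tilde{X}$ dominates the generic point of $D'$, hence has dimension $\ge\dim X-1$; combined with the general bound $\dim\le\trdeg_k(\KK_{\nu})=\dim X-1$ this forces it to be a divisor realizing $\nu$, so $\nu$ is divisorial on $\tilde{X}$. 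Then $\cT_\ell(X)=\cT_\ell(\tilde{X})\subseteq\Ker(\hat{\nu})$ by the previous step. Intersecting over all $\nu\in\DVal_K$ gives \eqref{eqn:ctl}, and in particular shows $\cT_\ell(X)$ is an invariant $\cT_\ell(K)$ of $K$.

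I expect the genuine difficulty — and the reason for the hypothesis $\trdeg\ge 3$ and the cautious phrasing ``there exists a model $X$'' — to lie not in the formal manipulation above but in verifying that the $\ell$-adic package recalled in Section~\ref{sect:ella} really does extend from the smooth case of \cite{bt0} to normal projective models: that \eqref{eqn:seqq-pro-ell} and \eqref{eqn:dia-need} are exact and functorial, and above all that one can choose $X$ with $\cT_\ell(X)\simeq\Z_\ell^{2\mathsf g}$, i.e. with $\underline{\Pic}^0_{X/k}$ an abelian variety and $\Pic^0(X)$ $\ell$-divisible. This last point is where I would restrict to a general member of a Lefschetz pencil on $X$ — a variety of transcendence degree $\ge 2$, which is why $\trdeg\ge 3$ is needed — and invoke Lemma~\ref{lemm:genus-generic} together with the inductive hypothesis to control the irregularity; granting that, the birational invariance and the identity \eqref{eqn:ctl} follow as above.
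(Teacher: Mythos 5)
Your approach is genuinely different from the paper's, and unfortunately it has a gap precisely at the step you dismiss as a ``formal manipulation.'' The paper proves this lemma by identifying $\cT_\ell(X)$ with $\cT_\ell(\Alb(X))$, observing that the Albanese construction is functorial, and bounding $\dim\Alb(X)$ uniformly over all models via the genus of flexible curves; this gives a \emph{maximal} abelian variety $\Alb(K)$ and a \emph{class} of models on which $\Alb(X)=\Alb(K)$ — hence the cautious phrasing ``there exists a normal projective model $X$.'' Your argument, by contrast, would establish the stronger statement that $\cT_\ell(X)=\Ker(\dv_X)$ is independent of \emph{every} normal projective model, and that stronger statement is false in general: for a normal model with sufficiently bad singularities (e.g.~a cone over a positive-genus variety and its resolution, crossed with a curve to reach $\trdeg\ge 3$), $\Pic^0$ and hence $\cT_\ell$ genuinely jump under resolution.

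The concrete failure point is the pushforward/pullback manipulation. In the paper $\Div(X)$ means locally principal (Cartier) divisors. For a birational morphism $f\colon\tilde X\to X$ of normal varieties with $X$ singular, $f_*$ of a Cartier divisor on $\tilde X$ is generally not Cartier on $X$, so $f_*\colon\Div(\tilde X)\to\Div(X)$ is not a well-defined homomorphism, and a fortiori does not extend continuously to the pro-$\ell$ completions; the identity $\dv_X=f_*\circ\dv_{\tilde X}$ only holds at the level of cycles on $K^*/k^*$ and does not propagate to $\hat K^*$ in the way you need. Relatedly, identifying $\Ker(\dv_X)$ with $\bigcap_{D\subset X}\Ker(\hat\nu_D)$ presumes that the Cartier group $\Div^0(X)$ injects into the free Weil-cycle group with $\ell$-torsion-free cokernel after completion, which can fail when $\Cl(X)/\Pic(X)$ has $\ell$-torsion. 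Your closing paragraph correctly senses that the subtlety lies in extending the smooth picture of \cite{bt0} to normal models, but it locates the difficulty in the exactness of the auxiliary sequences rather than in the divisor-theoretic bookkeeping where it actually sits; the paper's Albanese argument sidesteps all of this by working with a birational invariant abelian variety directly.
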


\begin{proof}
For any projective $X$, its Albanese $\Alb(X)$ is the maximal abelian variety 
such that 
\begin{itemize}
\item there exists a morphism $X\ra \Alb(X)$ and
\item $\Alb(X)$ is generated, as an algebraic group, by the image of $X$.
\end{itemize}
This construction is functorial. 
Then there exists an abelian variety $\Alb(K)$ which is maximal for all such models. 
Indeed, the dimension of $\Alb(X)$ is bounded by the genus of a 
flexible curve on any  birational model of $X$. Thus there exists a maximal 
$\Alb(K)$ dominating all $\Alb(X)$ and a class of normal models where 
$\Alb(X)=\Alb(K)$. 
It suffices to observe that $\cT_{\ell}(X)=\cT_{\ell}(\Alb(K))$.

The second claim follows from the fact that every divisorial valuation 
can be realized as a divisor on a normal model $X$ of $K$. 
\end{proof}

\begin{lemm}
\label{lemm:tl}
Let $K=k(X)$ be the function field of a normal
projective variety $X\subset \P^N$ of dimension $\ge 3$. 
For every divisorial valuation $\nu\in \DVal_K$ there is a canonical
homomorphism:
$$
\xi_{\nu,\ell} \,:\, \cT_{\ell}(K)\ra \cT_{\ell}(\KK_{\nu}).
$$
Assume that $\nu$ corresponds to an irreducible normal 
hyperplane section of $X$. 
Then $\xi_{\nu,\ell}$ is an isomorphism.  
\end{lemm}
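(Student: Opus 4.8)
The plan is to define $\xi_{\nu,\ell}$ as the restriction to $\cT_{\ell}(K)$ of the residue homomorphism $\res_{\nu}$ of \eqref{eqn:residue-field}, and then to identify it, on an auxiliary model, with the map induced by restriction of line bundles to a divisor, so that the hyperplane hypothesis reduces the isomorphism claim to the Lefschetz hyperplane theorem. By \eqref{eqn:ctl} we have $\cT_{\ell}(K)\subseteq \cap_{\nu'\in\DVal_K}\Ker(\hat\nu')\subseteq \Ker(\hat\nu)=\Ker(\nu)$, so $\res_{\nu}$ is defined on $\cT_{\ell}(K)$; set $\xi_{\nu,\ell}:=\res_{\nu}|_{\cT_{\ell}(K)}$. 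This is canonical because, as noted after \eqref{eqn:residue-field}, $\res_{\nu}$ is: under Kummer duality it is the map $\Hom(\G^a_K,\Z_{\ell})\supset\Ker(\nu)\to\Hom(\D^a_{\nu}/\I^a_{\nu},\Z_{\ell})=\hat{\KK}^*_{\nu}$ induced by $\D^a_{\nu}/\I^a_{\nu}\simeq\G^a_{\KK_{\nu}}$.

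Next I would check that $\xi_{\nu,\ell}$ takes values in $\cT_{\ell}(\KK_{\nu})$. Fix a normal projective model $X$ of $K$ that is good in the sense of Lemma~\ref{lemm:alba} and on which $\nu=\nu_D$ for an irreducible divisor $D$; after a further modification, which by the defining property of a good model does not change $\cT_{\ell}(X)=\cT_{\ell}(K)$, arrange that $D$ is normal and a good model of $\KK_{\nu}=k(D)$, so that $\cT_{\ell}(D)=\cT_{\ell}(\KK_{\nu})$ as well; here $\trdeg_k\KK_{\nu}=\dim X-1\ge 2$, and one invokes Lemma~\ref{lemm:alba} if $\dim X\ge 4$ and \cite[Lemmas 11.12 and 11.14]{bt0} if $\dim X=3$. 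The residue map is compatible with taking divisors: for $\hat f\in\Ker(\nu)$ one has $\dv_D(\res_{\nu}\hat f)=\iota_D^{*}(\dv_X\hat f)$, where $\iota_D\colon D\hookrightarrow X$ and $\iota_D^{*}$ denotes intersection with $D$ of a divisor having no $D$-component --- this is just the fact that the divisor of the restriction of a $\nu$-unit equals the restriction of its divisor. Hence for $\hat f\in\cT_{\ell}(K)$ we get $\dv_D(\res_{\nu}\hat f)=\iota_D^{*}(\dv_X\hat f)=\iota_D^{*}(0)=0$, which by sequence \eqref{eqn:seqq-pro-ell} for the good model $D$ means exactly $\res_{\nu}\hat f\in\cT_{\ell}(D)=\cT_{\ell}(\KK_{\nu})$. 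Independence of all these choices is automatic since $\res_{\nu}$ is intrinsic to $(K,\nu)$ and the targets are the intrinsic invariants of Lemma~\ref{lemm:alba}.

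For the isomorphism statement, suppose $\nu$ is realized by an irreducible normal hyperplane section $D$ of some normal projective model $X\subset\P^N$ with $\dim X\ge 3$. Identifying $\cT_{\ell}(Y)=\varprojlim\Pic^0(Y)[\ell^n]$ with $\H^1(Y,\Z_{\ell}(1))$ (Kummer sequence, using $H^0(Y,\mathbb G_m)=k^*$ with $k$ algebraically closed, so that the $\NS$-contribution disappears in the limit) for $Y=X,D$, the computation of the previous paragraph identifies $\xi_{\nu,\ell}$ with the restriction map $\H^1(X,\Z_{\ell}(1))\to\H^1(D,\Z_{\ell}(1))$ --- once one knows, using the hyperplane hypothesis, that $X$ and $D$ already compute $\cT_{\ell}(K)$ and $\cT_{\ell}(\KK_{\nu})$ (which follows by applying the Lefschetz comparison below between $X$, $D$ and further normal hyperplane sections, together with \cite{bt0}). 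It then remains to prove that this restriction is an isomorphism: this is the Lefschetz hyperplane theorem, in the form that $\pi_1(D)\to\pi_1(X)$ (equivalently $\H^1(X,\Z_{\ell}(1))\to\H^1(D,\Z_{\ell}(1))$, equivalently $\Pic^0_X\to\Pic^0_D$ on $\ell$-primary torsion) is an isomorphism when $D$ is an ample divisor and $\dim X\ge 3$; the relevant depth hypotheses in the Grothendieck--Lefschetz formalism (SGA 2) are supplied by the normality of $X$ and of $D$. Combined with $\cT_{\ell}(X)=\cT_{\ell}(K)$ and $\cT_{\ell}(D)=\cT_{\ell}(\KK_{\nu})$ this shows $\xi_{\nu,\ell}$ is an isomorphism.

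The main obstacle is making the Lefschetz comparison work in our setting: $X$ is only normal, not smooth, and over $k=\ovl{\F}_p$ there is no resolution of singularities, so one cannot simply reduce to the classical smooth case. One route is to stay within the Grothendieck--Lefschetz formalism of SGA 2, checking that normality (Serre's condition $S_2$), together with the normality of the ample divisor $D$, gives the conditions needed for the $\H^1$/Picard comparison in dimension $\ge 3$; a second route is to pass to a smooth alteration of degree prime to $\ell$ (Gabber's refinement of de Jong) and transport the statement, the prime-to-$\ell$ condition ensuring no spurious $\ell$-torsion is introduced in $\cT_{\ell}$. A secondary, bookkeeping difficulty is the reconciliation of the concrete models $X\supset D$ with the intrinsic invariants $\cT_{\ell}(K)$, $\cT_{\ell}(\KK_{\nu})$; this is handled by the stability of goodness under modifications (Lemma~\ref{lemm:alba}) applied on $X$ and on $D$, and by iterating the Lefschetz comparison down to the surface case treated in \cite{bt0}.
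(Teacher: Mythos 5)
Your proposal is correct and tracks the paper's (very terse) proof, which just says the map is ``induced from a canonical map of Albanese varieties'' and that the isomorphism follows by ``Lefschetz' theorem.'' Your Kummer-theoretic definition via $\res_\nu$ is the dual description of the Albanese-induced map $\Pic^0(X)\to\Pic^0(D)$ on Tate modules, and the compatibility check you run through with $\dv_D(\res_\nu\hat f)=\iota_D^*(\dv_X\hat f)=0$ is exactly what realizes that identification on a good model; the isomorphism then comes down to the same Lefschetz hyperplane statement in both treatments. Your explicit worry about normality versus smoothness and the absence of resolution over $\ovl{\F}_p$, with the SGA\,2 depth-condition route (or a prime-to-$\ell$ alteration) as remedies, is a genuine gap in the paper's two-line argument that your writeup handles more carefully than the original.
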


\begin{proof}
The map is induced from a canonical map of Albanese varieties
(see \cite[Lemma 11.12]{bt0}). It suffices to apply Lefschetz' theorem.
\end{proof}

\begin{lemm}
\label{lemm:lef}
Let $\lambda\,:\, X\ra \P^1$ be a Lefschetz pencil on
a normal variety of dimension $\ge 3$ and $D_t=\lambda^{-1}(t)$. Then: 
\begin{enumerate}
\item For all but finitely many $t\in \P^1$, 
$$
\xi_{D_t,\ell}\,:\, \cT_{\ell}(X) \stackrel{\sim}{\lra} \cT_\ell(D_t),
$$
is an isomorphism.
\item
For any $t\in \P^1$ and any surjection $D_t\ra C_t$ 
onto a smooth projective curve 
we have $\mathsf g(C_t)\le \rk_{\Z_{\ell}}(\cT_{\ell}(X))$.
\end{enumerate}
\end{lemm}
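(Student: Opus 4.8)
The plan is to prove the two parts in turn, obtaining part~(1) directly from Lemma~\ref{lemm:tl} and deducing part~(2) from part~(1) by means of the functoriality of the Albanese variety and the Lefschetz hyperplane theorem. For part~(1): as in the proof of Lemma~\ref{lemm:genus-generic}, realize the $D_t$ as the members of a pencil of hyperplane sections of $X$ in a fixed projective embedding. By the definition of a Lefschetz pencil every $D_t$ is irreducible and the generic one is normal, and since geometric normality of the fibers of a flat projective family is an open condition on the base, all but finitely many $D_t$ are irreducible \emph{normal} hyperplane sections of $X$. For each such $t$ the divisorial valuation $\nu_{D_t}$ satisfies the hypothesis of Lemma~\ref{lemm:tl}, so $\xi_{D_t,\ell}=\xi_{\nu_{D_t},\ell}\colon\cT_\ell(X)\to\cT_\ell(D_t)$ is an isomorphism; this is all of part~(1).

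For part~(2), fix $t\in\P^1$ and a surjection $\phi\colon D_t\to C_t$ onto a smooth projective curve; we may assume $\mathsf g(C_t)\ge 1$, the case $C_t=\P^1$ being trivial. Since $D_t$ is irreducible, $\nu:=\nu_{D_t}$ is a divisorial valuation of $K$ with residue field $\KK_\nu=k(D_t)$, and $\phi$ exhibits $C_t$ as dominated by a model of $\KK_\nu$. Passing to a smooth projective model $Y$ of $\KK_\nu$ (over which the resulting rational map $Y\dashrightarrow C_t$ is automatically a morphism, as $C_t$ is of positive genus) and using that a dominant morphism of smooth projective varieties induces a surjection of Albanese varieties, one gets $\Alb(Y)\twoheadrightarrow\Alb(C_t)=\mathrm{Jac}(C_t)$, hence
$$
\mathsf g(C_t)=\dim\mathrm{Jac}(C_t)\le\dim\Alb(Y)=\tfrac12\,\rk_{\Z_\ell}\cT_\ell(\KK_\nu).
$$
It therefore suffices to show $\rk_{\Z_\ell}\cT_\ell(\KK_\nu)\le\rk_{\Z_\ell}\cT_\ell(X)$ for every $t$. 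For generic $t$ this is exactly part~(1), with equality. For the remaining finitely many $t$ I would invoke weak Lefschetz for the ample divisor $D_t\subset X$: because $\dim D_t=\dim X-1\ge 2$, the degree $1$ lies strictly below the middle degree $\dim D_t$, in which the vanishing cohomology of the pencil is concentrated, so restriction is an isomorphism $H^1_{\mathrm{et}}(X,\Q_\ell)\xrightarrow{\sim}H^1_{\mathrm{et}}(D_t,\Q_\ell)$; since passing from $D_t$ to a smooth projective model (via its normalization and a resolution) does not increase $\dim\Pic^0$, this gives
$$
\rk_{\Z_\ell}\cT_\ell(\KK_\nu)\le\dim H^1_{\mathrm{et}}(D_t,\Q_\ell)=\dim H^1_{\mathrm{et}}(X,\Q_\ell)=\rk_{\Z_\ell}\cT_\ell(X),
$$
which is the asserted bound (in fact it yields the stronger inequality $2\mathsf g(C_t)\le\rk_{\Z_\ell}\cT_\ell(X)$).

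The genuinely routine parts are part~(1) and the Albanese bookkeeping in part~(2); the main obstacle is the last step for the special fibers of the pencil, where one needs a version of the Lefschetz hyperplane theorem valid for the possibly singular --- and, with the loose notion of a Lefschetz pencil used here, possibly non-normal --- ample hyperplane sections $D_t$ over a field of characteristic $p$, together with control of $H^1_{\mathrm{et}}$, equivalently of $\dim\Pic^0$, under normalization and resolution. I would organize this by taking the pencil to be a genuine Lefschetz pencil, so that every $D_t$ has at worst an ordinary double point, which in dimension $\ge 2$ is a normal rational singularity; for such $D_t$ the required Lefschetz isomorphism and the invariance of $\dim\Pic^0$ under resolution are standard, and the general loosely-defined case is reduced to this one by the Lefschetz theorems for ample divisors of SGA~2.
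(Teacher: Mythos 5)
Your argument is in essence the same as the paper's (which simply invokes ``standard facts'' and Lemma~\ref{lemm:tl}): part~(1) is Lemma~\ref{lemm:tl} applied to the generic, normal members of the pencil, and part~(2) is Albanese functoriality plus weak Lefschetz for the special fibers. You are right that the latter is where the real content lies, and your reduction to genuine Lefschetz pencils with at worst ordinary double points is the standard way to make it rigorous.

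One point worth flagging: in part~(2) you pass to a ``smooth projective model $Y$ of $\KK_\nu$.'' Since $\dim D_t = n-1$ can be $\ge 3$ and the ground field has characteristic $p$, resolution of singularities is not available in general. This is easy to repair in the spirit of the paper, which consistently works with \emph{normal} projective models and with the birational invariant $\cT_\ell(\KK_\nu)$ from Lemma~\ref{lemm:alba}: take a normal projective model $Y$ of $\KK_\nu$ on which the rational map to $C_t$ is a morphism (such $Y$ exists by blowing up and normalizing, and replacing $Y$ does not change $\cT_\ell(\KK_\nu)$); a dominant morphism $Y\to C_t$ of normal projective varieties still induces a surjection of Albanese varieties, so the Albanese bookkeeping goes through unchanged. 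Alternatively one can use de Jong's alterations, which preserve $\Alb$ up to isogeny and hence the $\Z_\ell$-rank of $\cT_\ell$. With that adjustment your proof is correct, and as you observe it even yields the sharper bound $2\mathsf g(C_t)\le \rk_{\Z_\ell}\cT_\ell(X)$ — the factor of two of slack in the statement is presumably there so the authors need not be careful about the most degenerate fibers.
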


\begin{proof}
Follows from standard facts 
for general hyperplane sections of 
normal varieties (see Lemma~\ref{lemm:tl}).  
\end{proof}

\begin{lemm} 
\label{lemm:pencil}
Let $\pi\, :\, X\to C$ be a surjective map with irreducible fibers.
Assume that $\hat{f}\in \Ker(\hat{\nu})$ and that 
$\res_{\nu}(\hat{f})=1\in \hat{\KK}_{\nu}^*$, for infinitely many $\nu\in \DVal_K$
corresponding to fibers of $\pi$.
Then $\hat{f}$ is induced from $\widehat{k(C)}^*$.
\end{lemm}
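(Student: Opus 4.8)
The plan is to reduce the statement to a rigidity property of the element $\hat f$: since $\hat f\in\Ker(\hat\nu)$ for the fibral valuations $\nu$ and its residue $\res_\nu(\hat f)$ is trivial there, $\hat f$ should contribute nothing transversal to the fibers of $\pi$, hence must be ``constant along the fibers.'' Concretely, fix a normal projective model $X$ together with $\pi\colon X\to C$, and write $\dv_X(\hat f)=\sum_m \hat a_m D_m\in\widehat{\Div}(X)$ as in \eqref{eqn:dia-need}. The first step is to control the irreducible divisors $D_m$ with $\hat a_m\neq 0$: I claim each such $D_m$ is $\pi$-vertical, i.e.\ maps to a point of $C$. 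Indeed, if $D_m$ surjected onto $C$, then by Lemma~\ref{lemm:ea1} its restriction $D_m\cdot\pi^{-1}(c)$ is ($p$-)irreducible for all but finitely many $c\in C$, so $D_m$ would appear with nonzero coefficient in the divisor of $\hat f$ restricted to $\KK_\nu=k(\pi^{-1}(c))$ for infinitely many of the chosen fibral $\nu$; but $\res_\nu(\hat f)=1$ forces that restriction to vanish, a contradiction (using that $\res_\nu$ is compatible with $\dv$, via \eqref{eqn:residue-field} and the exactness in Section~\ref{sect:ella}, and that $\KK_\nu$ is again a function field of dimension $\ge 2$). Hence $\dv_X(\hat f)$ is supported on $\pi$-vertical divisors.

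Next I would show that $\dv_X(\hat f)$ is actually a pullback from $C$. A $\pi$-vertical prime divisor either is a whole fiber $\pi^{-1}(c)$ (these exist with multiplicity one since fibers are irreducible) or is a proper subvariety of some fiber. Subtracting an appropriate $\Z_\ell$-linear combination of pullbacks $\pi^*(c)=\pi^{-1}(c)$ — which themselves are divisors of elements of $\widehat{k(C)}^*$ pulled back to $\hat K^*$ — we may assume $\dv_X(\hat f)$ is supported only on divisors strictly contained in fibers. But the condition $\res_\nu(\hat f)=1$ for infinitely many fibral $\nu$, applied again through \eqref{eqn:residue-field}, says precisely that the image of $\hat f$ in $\hat{\KK}_\nu^*$ is trivial, i.e.\ $\hat f$ has trivial divisor on the generic fiber; combined with $\hat f\in\Ker(\hat\nu)$ this kills all remaining vertical components (a vertical divisor inside a fiber $\pi^{-1}(c)$ contributes a nontrivial element of $\widehat{\Div}$ of $\pi^{-1}(c')$ for nearby $c'$, or already of the generic fiber, after a birational modification separating it off). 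Therefore $\dv_X(\hat f)=\pi^*(\hat D_C)$ for some $\hat D_C\in\widehat{\Div^0}(C)$.

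Finally I would lift this back to multiplicative groups. By \eqref{eqn:seqq-pro-ell} applied to $C$, together with $\cT_\ell(C)=0$ when $C$ is a curve of genus zero, or more generally with the $\cT_\ell$-correction understood via Lemma~\ref{lemm:alba} and the functoriality \eqref{eqn:tl-funct} for $\pi$, the class $\hat D_C$ lifts to some $\hat g\in\widehat{k(C)}^*$ with $\dv_C(\hat g)=\hat D_C$. Then $\hat f$ and the pullback of $\hat g$ have the same divisor on $X$, so by the exact sequence \eqref{eqn:seqq-pro-ell} for $X$ they differ by an element of $\cT_\ell(X)=\cap_{\nu\in\DVal_K}\Ker(\hat\nu)$. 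Pairing against the fibral valuations $\nu$ and using $\hat f\in\Ker(\hat\nu)$ shows this difference already lies in the image of $\cT_\ell(C)\hookrightarrow\cT_\ell(X)$ (here Lemma~\ref{lemm:lef}(1) identifies $\cT_\ell$ of a general fiber with $\cT_\ell(X)$, and chasing \eqref{eqn:dia-need} pins the class down); hence $\hat f$ itself is induced from $\widehat{k(C)}^*$.

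I expect the main obstacle to be the passage in the second paragraph from ``supported on vertical divisors'' to ``pullback from $C$'': handling vertical components that are \emph{proper} subvarieties of fibers requires a careful birational argument (replacing $X$ by a model on which such a component becomes a divisor dominating a curve, then invoking Lemma~\ref{lemm:ea1} again), and one must check that the $\ell$-adic coefficients $\hat a_m\in\Z_\ell$ behave well under these modifications — precisely the kind of normal-model bookkeeping that Lemma~\ref{lemm:alba} is designed to make uniform, so I would lean on it heavily.
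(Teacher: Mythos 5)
Your first step is exactly the paper's: use Lemma~\ref{lemm:ea1} to show that a horizontal component of $\dv_X(\hat f)$ would force $\res_\nu(\hat f)\neq 1$ for all but finitely many fibral $\nu$, a contradiction. So $\dv_X(\hat f)$ is vertical, as in the paper. Your second step, however, worries about vertical prime divisors that are ``proper subvarieties of fibers'' --- this concern is empty: since $C$ is a curve, a fiber $\pi^{-1}(c)$ already has codimension one, so the only prime divisor contained in it is its reduced support. With irreducible fibers, ``vertical'' \emph{is} ``a $\Z_\ell$-combination of reduced fibers,'' and the paper writes $\hat f=\tau\cdot\pi^*\hat g$ immediately. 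Your detour via birational modifications and Lemma~\ref{lemm:alba} does no harm but adds nothing.

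The third step is where there is a genuine gap, and in fact two errors. First, once you have $\tau=\hat f/\pi^*\hat g\in\cT_\ell(X)=\cap_\nu\Ker(\hat\nu)$, pairing against fibral $\hat\nu$ and invoking $\hat f\in\Ker(\hat\nu)$ is vacuous: every element of $\cT_\ell(X)$ lies in $\Ker(\hat\nu)$ for every $\nu$ by definition. You never use the second, crucial hypothesis $\res_\nu(\hat f)=1$, which (after checking $\res_\nu(\pi^*\hat g)$ is trivial for generic $c$) is what forces $\res_\nu(\tau)=1$, i.e.\ $\tau$ maps to $0$ under $\cT_\ell(X)\to\cT_\ell(D_c)$ for infinitely many $c$. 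Second, Lemma~\ref{lemm:lef}(1) does not apply: it is stated for Lefschetz pencils $\lambda\colon X\to\P^1$, not for a general $\pi\colon X\to C$. Worse, if it did apply you would get $\cT_\ell(X)\xrightarrow{\sim}\cT_\ell(D_c)$, hence $\tau=0$ --- but that is false in general, and is also inconsistent with your own claim in the same sentence that ``the difference lies in the image of $\cT_\ell(C)\hookrightarrow\cT_\ell(X)$''; the two statements are mutually exclusive unless $\cT_\ell(C)=0$. The paper's actual argument is geometric: $\tau$ restricting to $0$ in $\cT_\ell(D_c)$ for infinitely many $c$ means $\tau$ lies in the kernel of $\Pic^0(X)\to\Pic^0(D_c)$ at the Tate-module level, and for a general irreducible fiber this kernel is precisely the image of $\pi^*\colon J(C)=\Pic^0(C)\hookrightarrow\Pic^0(X)$ --- equivalently, $\tau$ is induced from the image of $X$ in $\Alb(X)/\Alb(D_c)$, which stabilizes to $J(C)$. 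Hence $\tau\in\cT_\ell(C)\subset\widehat{k(C)}^*$ and $\hat f\in\widehat{k(C)}^*$. You need this Albanese/Jacobian step; the Lefschetz-pencil isomorphism is the wrong tool and the $\Ker(\hat\nu)$ condition cannot do the work of $\res_\nu(\hat f)=1$.
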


\begin{proof} 
Assume that $\hat{f} \mod \ell^n$, for some $n\in \N$, 
contains a summand corresponding to a horizontal divisor $R$.
By Lemma~\ref{lemm:ea1}, 
$R$ intersects all but finitely many fibers $p^m$-transversally. 
In particular, $\dv_X(\hat{f})$ intersects infinitely many fibers nontrivially, contradiction
to the assumption.  
Thus $\dv_X(\hat f)$ is a sum of vertical divisors.

Hence $\hat{f} = \tau + \hat{g} $, where $\hat{g}\in \widehat{k(C)}^*$, and $\tau\in \cT_\ell(K)$. 
The triviality of $\tau $ on fibers $D_c=\pi^{-1}(c)$ implies that
$\tau$ is induced from the image of $X$ in $\Alb(X)/\Alb(D_c)$.
In particular, the triviality on infinitely many fibers implies
that it is induced from the Jacobian $J(C)$ and hence $\hat{f} \in \widehat{k(C)}^*$.
\end{proof}

\begin{nota}
\label{nota:supp}
Let $X$ be a normal projective model of $K$. 
For $\hat{f}\in \hat{K}^*$ with 
$$
\dv_X(\hat{f}) = \sum_{m} \hat{a}_m D_m
$$
we put
$$
\begin{array}{rcccl}
\supp_K(\hat{f})& := \{ &  \nu\in \DVal_K & | &  \hat{f} \,\,\, 
{\rm nontrivial\,\,\, on} 
\,\,\,\I^a_{\nu}\,\,\};\\
\supp_X(\hat{f})& := \{ &  D_m \subset X           & | &   \hat{a}_m\neq 0\,\,\};\\
\fibr(\hat{f}) &  := \{ &  \nu\in \DVal_K & | & \hat{f}\in \Ker(\hat{\nu})\,\text{ and }\,\, 
\res_{\nu}(\hat{f})=1\in \hat{\KK}_{\nu}\,\,\},
\end{array}
$$
where $\res_{\nu}$ is the projection from Equation~\eqref{eqn:residue-field}.
Note that the {\em finiteness} of 
$\supp_X(\hat{f})$ does not depend on the choice of the normal model $X$. 
Put
$$
\supp'_K(\hat{f}):=\fibr(\hat{f}) \cup \supp_K(\hat{f}).
$$
If $X$ is a normal model of $K$ write
$$
\supp'_X(\hat{f})\subset \supp'_K(\hat{f})
$$
for the subset of divisorial valuations realized by divisors on $X$. We have
$$
\supp'_K(\hat{f}) = \cup_X \,\,\supp'_X(\hat{f}).
$$
\end{nota}

\begin{defn}
A $K$-divisor is a function
$$
\DVal_K \ra \Z_{\ell}.
$$ 
Each $\hat{f}\in \hat{K}^*$ defines a $K$-divisor by
$$
{\rm div}_K(\hat{f}) \colon \quad \nu \mapsto [\delta_{\nu,K}, \hat{f}].
$$
\end{defn}

The different notions of support for elements in 
$\hat{K}^*$ introduced in Notation~\ref{nota:supp}
extend naturally to $K$-divisors. 
The divisor of $\hat f$ on a normal model $X$ of $K$ coincides
with the restriction of ${\rm div}_K(\hat{f})$ to the set of divisorial
valuations of $K$ which are realized by divisors on $X$.
In particular, it has finite support on $X$ modulo $\ell^n$, for any $n\in \N$.
(This fails for general $K$-divisors.)

\

Let $E\subset K$ be a one-dimensional subfield and $\pi_E\,:\, X\ra C$
the corresponding surjective map with irreducible generic fiber. 
For all nontrivial $\hat{f}_1,\hat{f}_2\in \hat{E}^*$,  
we have 
$$
\supp'_K(\hat{f}_1)=\supp'_K(\hat{f}_2).
$$
This gives a well-defined invariant of $\hat{E}^*$.  
We have a decomposition 
\begin{equation}
\label{eqn:decomp} 
\supp'_K(\hat{E}^*)= \sqcup_{c\in C} \,\,\supp'_{K,c} (\hat{E}^*),
\end{equation} 
where $\supp'_{K,c}(\hat{E}^*)$ 
are minimal nonempty subsets of the form 
$$
\supp_K(\hat{f}_1)\cap \supp_K(\hat{f}_2)
$$
contained in 
$\supp'_K(\hat{E}^∗)$; 
these correspond to sets of 
irreducible divisors supported in $\pi_E^{-1}(c)$,  for $c\in C (k)$. 
Note that $\supp'_K(\hat{E}^*)$ depends only on the 
normal closure of $E$ in $K$. On the other hand, 
the decomposition \eqref{eqn:decomp} is 
preserved only under purely inseparable extensions of $E$. 
We formalize this discussion 
in the following definition.

\begin{defn}
A formal projection is a triple 
$$
\pi_{\hat{E}}=(C,\{ R_c\}_{c\in C}, Q),
$$ 
where
$C$ is an infinite set, $\{ R_c\}_{c\in C}$ is a set of $K$-divisors, and  
$Q\subset\hat K^*$ a subgroup of $\Z_{\ell}$-rank at least two
satisfying the following properties:
\begin{enumerate}
\item for all $\hat{f}_1,\hat{f}_2\in Q$ one has 
$\supp_K'(\hat{f}_1 )= \supp_K'(\hat{f}_2)$;
\item $\supp_K(R_{c_1}) \cap  \supp_K(R_{c_2})=\emptyset$, for all pairs of distinct $c_1,c_2\in C$;
\item for all nontrivial $\hat{f}\in Q$ one has
$$
{\rm div}_K(\hat f)=\sum_{c\in C} a_c R_c, \quad a_c\in \Z_\ell,
$$
and
$$
\cup_{c\in C} \supp_K(R_c) = \supp_K'(\hat{f});
$$ 
\item for all $c_1,c_2\in C$ there exists an $m\in \N$ such that
$$
m(R_{c_1}-R_{c_2})= {\rm div}_K(\hat f),
$$ 
for some $\hat f\in Q$.
\end{enumerate}
\end{defn}

\begin{exam}
A one-dimensional subfield $E=k(C)\subset K$ defines a 
formal projection $\pi_{\hat{E}} = (C, \{R_c\}_{c\in C}, Q)$, 
with $C$ the set of $k$-points of the image of $\pi_E$, $R_c$ the intrinsic 
$K$-divisors over $c\in C$, and $Q=\hat{E}^*$.

Note that for normally closed subfields $E\subset K$, 
the corresponding subgroup $Q$ is maximal, for subgroups of $\hat{K}^*$
appearing in formal projections.  
\end{exam}

\begin{lemm}
\label{lemm:image-rc}
The formal divisor $\dv_X(R_c)$ is finite $\mod \ell^n$ for any model $X$.
\end{lemm}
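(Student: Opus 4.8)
The plan is to deduce this from properties (2) and (4) in the definition of a formal projection, together with the basic fact recorded at the end of Section~\ref{sect:ella}: for any $\hat g\in\hat K^*$ the divisor $\dv_X(\hat g)$ has finite support on a normal model $X$ modulo $\ell^n$, for every $n\in\N$.

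First I would fix a normal projective model $X$, an integer $n\in\N$, and two distinct points $c_1,c_2\in C$ (possible, since $C$ is infinite). By property (4) there are $m\in\N$ and a nontrivial $\hat f\in Q\subseteq\hat K^*$ with $m(R_{c_1}-R_{c_2})={\rm div}_K(\hat f)$ as $K$-divisors. Restricting both sides to the divisorial valuations realized by divisors on $X$ gives the identity $m\bigl(\dv_X(R_{c_1})-\dv_X(R_{c_2})\bigr)=\dv_X(\hat f)$. Writing $m=\ell^k m'$ with $\ell\nmid m'$, and using that $\dv_X(\hat f)$ is finite modulo $\ell^{n+k}$ together with $m'\in\Z_\ell^*$, an elementary $\ell$-adic valuation argument shows that $\dv_X(R_{c_1})-\dv_X(R_{c_2})$ has finite support modulo $\ell^n$: for all but finitely many divisors $D\subset X$ the coefficient $\beta_D$ of $\dv_X(\hat f)$ at $D$ lies in $\ell^{n+k}\Z_\ell$, hence the coefficient $\alpha_D$ of $\dv_X(R_{c_1})-\dv_X(R_{c_2})$ at $D$ satisfies $\ell^k m'\alpha_D=\beta_D\in\ell^{n+k}\Z_\ell$, so $\alpha_D\in\ell^n\Z_\ell$.

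The decisive step is property (2): $\supp_K(R_{c_1})\cap\supp_K(R_{c_2})=\emptyset$, hence the supports of $\dv_X(R_{c_1})$ and $\dv_X(R_{c_2})$ on $X$ are disjoint. Then, at each divisor $D\subset X$, the coefficient of $\dv_X(R_{c_1})-\dv_X(R_{c_2})$ is either $0$, or equals a coefficient of $\dv_X(R_{c_1})$, or equals minus a coefficient of $\dv_X(R_{c_2})$, with no cancellation ever occurring; consequently the finiteness modulo $\ell^n$ of the difference forces each of $\dv_X(R_{c_1})$ and $\dv_X(R_{c_2})$ to have finite support modulo $\ell^n$. Letting $c_2$ range over $C\setminus\{c_1\}$, and then $n$ over $\N$, yields the statement for every $c\in C$ and every model $X$. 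I expect the only subtlety to be this last splitting argument — correctly exploiting the disjointness of the supports $\supp_K(R_{c_1})$ and $\supp_K(R_{c_2})$ to pass from finiteness of the difference to finiteness of each summand; absorbing the integer factor $m$ by enlarging the modulus to $\ell^{n+k}$ is routine.
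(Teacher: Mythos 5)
Your proof is correct and takes essentially the same route as the paper's terse argument: finiteness of $\dv_X(\hat f)\bmod\ell^n$ for $\hat f\in Q\subset\hat K^*$, combined with the disjointness of the supports of the $R_c$. Your explicit appeal to property~(4) to produce a relation $m(R_{c_1}-R_{c_2})={\rm div}_K(\hat f)$ with $m\in\N$, so that the integer scalar can be absorbed by enlarging the modulus to $\ell^{n+k}$, is a useful precision that the paper leaves implicit (property~(3) alone would not suffice, since the coefficients $a_c\in\Z_\ell$ appearing there may vanish or have positive $\ell$-adic valuation).
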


\begin{proof} 
The support of  $\Psi^*(\hat f) \mod \ell^n$ is finite for all $n\in \N$.
Now observe that the $K$-divisors $R_c$ have disjoint support in $\supp_K'(Q)$, thus have
no components in common. 
\end{proof}

\section {One-dimensional subfields}
\label{sect:1-dim}

We recall the setup of Theorem~\ref{thm:main}:
$$
\Psi \,:\, \G^a_K\ra \G^a_L.
$$
Our goal here is to show: 

\centerline{
\xymatrix{
\hat{L}^*  \ar[r]^{\Psi^*}  & \hat{K}^* \\
  L^*/l^*  \ar[r]   \ar[u]  &  (K^*/k^*)^{\epsilon} \ar[u]
}
}

\

\noindent
for some constant $\epsilon$.
We know that $g\in K^*/k^*\otimes \Z_{\ell}$ have finite support $\supp_X(g)$, on every 
normal model $X$ of $K$.  In the second half of this
section we will prove:

\begin{prop}[Finiteness of support]
\label{prop:finite-support}
For all $f\in L^*/l^*$ and all normal models $X$ of $K$ 
the support $\supp_X(\Psi^*(f))$ is finite. 
\end{prop}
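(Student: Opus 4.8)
The plan is to reduce the statement to the one-dimensional situation inside $L$ and then transport the finiteness through $\Psi^*$ using the intrinsic characterization of divisorial support in terms of $\Sigma_K$. Since $\trdeg_l(L)\ge 2$, we may pick a one-dimensional normally closed subfield $F=l(f)^{\mathrm{nc}}\subset L$ through $f$, with associated map $\pi_F\colon Y\to C$ (Lemma~\ref{lemm:subf}) on some normal model $Y$ of $L$. The element $f$, viewed in $\hat L^*$, has $\mathrm{div}_L(f)$ supported on fibers of $\pi_F$: its $K$-divisor support $\supp_L(f)$ decomposes into the finitely many (mod $\ell^n$) fiber-components over the finitely many $c\in C$ at which $f$ has a zero or pole. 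Thus $\supp_L(f)$, as a set of divisorial valuations, meets only finitely many of the fiber-packets $\supp'_{L,c}(\hat F^*)$; this is the content of Lemma~\ref{lemm:image-rc} applied to the formal projection $\pi_{\hat F}$.

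Next I would use that $\Psi^*$ is defined intrinsically from $\Psi$, hence carries inertia subgroups of divisorial valuations to inertia subgroups of divisorial valuations: by Lemma~\ref{lemm:sigma-struct} the datum of $\I^a_\nu$ for a divisorial $\nu$ is recovered from pairs $\sigma_1,\sigma_2\in\Sigma_K$ of full rank with topologically cyclic intersection, and $\Psi(\Sigma_L)=\Sigma_K$. Therefore $\Psi^*$ induces a bijection $\DVal_L=\DVal_K$ and, for each $\nu$, a bijection $\I^a_{\nu,L}\to\I^a_{\nu,K}$ scaling $\delta_{\nu,L}$ by the unit $\varepsilon_\nu$ of~\eqref{eqn:epsilon}. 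Consequently, for any $\hat h\in\hat L^*$ one has the tautology
$$
\supp_K(\Psi^*(\hat h)) = \supp_L(\hat h)
$$
as subsets of $\DVal_K=\DVal_L$, because membership "$\hat h$ nontrivial on $\I^a_\nu$" is preserved under the pairing $[\,\cdot\,,\,\cdot\,]$ and the identification $\Psi$. Applying this to $\hat h=f$, we get that $\supp_K(\Psi^*(f))$ equals the finite (mod $\ell^n$, for the relevant $n$) set $\supp_L(f)$ inside $\DVal_K$.

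The remaining point — and the main obstacle — is to pass from finiteness of the abstract set $\supp_K(\Psi^*(f))\subset\DVal_K$ to finiteness of $\supp_X(\Psi^*(f))$ for an arbitrary fixed normal model $X$ of $K$. For a genuine $\hat g\in\hat K^*$ these agree on the divisors of $X$: $\supp_X(\hat g)$ is exactly the restriction of $\supp_K(\hat g)=\supp(\mathrm{div}_K(\hat g))$ to the divisorial valuations realized on $X$ (Notation~\ref{nota:supp} and the remark after the definition of $K$-divisor), so finiteness of $\supp_K$ forces finiteness of $\supp_X$. The subtlety is that $\Psi^*(f)$ is a priori only an element of $\hat K^*$ produced by Kummer duality, so one must check that its $K$-divisor has the stated compatibility with every model — but this is automatic: $\Psi^*(f)\in\hat K^*$ has a genuine representation $f_0f_1^\ell f_2^{\ell^2}\cdots$ with $f_i\in K^*$, so $\dv_X(\Psi^*(f))=\sum_n\ell^n\dv_X(f_n)$, and its support mod $\ell^n$ on $X$ is finite iff $\dv_X(f_0),\dots,\dv_X(f_{n-1})$ have finite support, which they do since each $f_i\in K^*$ defines an honest Weil divisor. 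Thus the only real work is the identification $\supp_K(\Psi^*(f))=\supp_L(f)$ via $\Psi$ together with the finiteness of the latter coming from $f\in L^*/l^*$; I expect the write-up to spend most of its length making the fiberwise decomposition bookkeeping of $\supp_L(f)$ precise, invoking Lemmas~\ref{lemm:ea1} and~\ref{lemm:image-rc}.
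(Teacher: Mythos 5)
Your plan breaks down at exactly the point you flag as ``automatic.'' The identity $\supp_K(\Psi^*(\hat h)) = \supp_L(\hat h)$ as subsets of $\DVal_K=\DVal_L$ is indeed a formal consequence of $\Psi(\I^a_{\nu,K})=\I^a_{\nu,L}$, but it does not deliver the proposition, because the set $\supp_L(f)$ is \emph{not} a finite subset of $\DVal_L$ for a general $f\in L^*/l^*$. What is finite is $\supp_Y(f)$ for each fixed normal model $Y$ of $L$; as one ranges over blowups, $f$ acquires nonzero order along infinitely many new divisorial valuations, so $\supp_L(f)\subset\DVal_L$ is typically infinite. Your sentence ``$\supp_K(\Psi^*(f))$ equals the finite (mod $\ell^n$\ldots) set $\supp_L(f)$'' conflates the model-dependent finite set with the model-independent infinite one. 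The fact that $\supp_X(\Psi^*(f))$ is finite mod $\ell^n$ for each $n$ is, as you say, automatic from the expansion $\Psi^*(f)=f_0f_1^\ell f_2^{\ell^2}\cdots$; but that is the trivial part and is not what the proposition asserts.

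The genuine difficulty, which is the entire content of the proposition, is that the abstract bijection $\DVal_L\simeq\DVal_K$ induced by $\Psi$ carries no geometry: divisors that live on a single normal model $Y$ of $L$ need not correspond, under this bijection, to divisors living on any single normal model $X$ of $K$. So even though $\supp_Y(f)$ is finite for every $Y$, there is no a priori reason its image should intersect the divisors of $X$ in a finite set, and in fact ruling this out is hard. The paper's proof proceeds by contradiction and is geometric: assuming $\supp_X(\Psi^*(f))$ infinite, it invokes Lemma~\ref{lemm:rc} to produce many ``fiber-packets'' $R_{c_j}$ with infinite image support, slices with a Lefschetz pencil $\lambda\colon X\to\P^1$ (Lemma~\ref{lemm:lef}), applies the inductive hypothesis on the residue fields $\KK_{\nu_t}$ to get induced maps $\pi_t\colon D_t\to C_t$, restricts to a flexible curve $T_t\subset D_t$ of uniformly bounded genus (Lemma~\ref{lemm:genus-generic}), and then uses the degree estimate \eqref{eqn:degree}, $p$-irreducibility from Lemma~\ref{lemm:ea1}, and the Hurwitz bound of Lemma~\ref{lemm:hurww} to force $\mathsf g(T_t)>m$, a contradiction. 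None of this machinery appears in your proposal, and it cannot be bypassed by the tautological identification of supports.
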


Assuming this, we will prove:

\begin{prop}[Image of $\Psi^*$]
\label{prop:33}
For all $f\in L^*/l^*$ there exist a function 
$g\in K^*/k^*$ and constants $N\in \N$, $\alpha\in \Z_{\ell}$ such that 
\begin{equation}
\label{eqn:indu}
\Psi^*(f)^N=g^{\alpha}.
\end{equation}
Moreover, there exists a constant $\epsilon\in \Z_{\ell}^*$ such that 
$$
\Psi^*(l(f)^*/l^* \otimes \Z_{(\ell)}) 
\subseteq \left(k(g)^*/k^* \otimes \Z_{(\ell)}\right)^\epsilon.
$$
\end{prop}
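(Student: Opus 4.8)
The plan is to prove Proposition~\ref{prop:33} in two stages. First, I would use the Finiteness of Support (Proposition~\ref{prop:finite-support}) together with the Galois-theoretic recovery of divisorial inertia and decomposition groups from Section~\ref{sect:gal} and the diagram~\eqref{eqn:dia-need} to show that $\Psi^*(f)$, which a priori lives in $\hat K^*$, in fact lies (up to a $\Z_\ell$-power) in the image of $K^*/k^*\otimes\Z_\ell$. The point is this: by Lemma~\ref{lemm:alba}, $\cT_\ell(K)=\cap_\nu\Ker(\hat\nu)$, and $\dv_X(\Psi^*(f))\in\widehat{\Div^0}(X)$ has finite support modulo $\ell^n$ for all $n$ on a suitable model $X$; by the exactness of~\eqref{eqn:seqq-pro-ell} and the fact that $\Pic^0(X)\{\ell\}$ is finite, an element of $\hat K^*$ whose divisor has finite support and actually lies in $\Div^0(X)_\ell$ differs from an element of $K^*/k^*\otimes\Z_\ell$ by something in $\cT_\ell(K)$. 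I would then need to kill the $\cT_\ell$-part: here I would invoke the naturality of $\Psi^*$ with respect to restriction to residue fields of divisorial valuations (the maps $\res_\nu$ of~\eqref{eqn:residue-field}, compatible with $\Psi$ by the bijection on decomposition groups), combined with Lemma~\ref{lemm:pencil} and Lemma~\ref{lemm:lef}(2): since $l(f)$ is one-dimensional, its image under $\Psi^*$ has a special support structure forcing the $\cT_\ell$-component to vanish after raising to a suitable power $N$. This gives the equation $\Psi^*(f)^N=g^\alpha$ with $g\in K^*/k^*$, $\alpha\in\Z_\ell$.

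Second, I would upgrade this pointwise statement to the statement about the whole subfield $l(f)$. The key structural input is that $l(f)\subset L$ is (after taking normal closure) a one-dimensional subfield, so by the discussion preceding Definition~\ref{defn:fan} of formal projections, $\hat{l(f)}^*$ carries a decomposition of its support $\supp'_K(\hat E^*)=\sqcup_{c\in C}\supp'_{K,c}(\hat E^*)$ into the fibral pieces. Applying $\Psi^*$ and using finiteness of support, I would show that $\Psi^*(\hat{l(f)}^*)$ determines a formal projection $\pi_{\hat E}=(C,\{R_c\},Q)$ on $\hat K^*$ in the sense of the Definition. The crucial algebraic step is then to recognize that this formal projection comes from an honest one-dimensional subfield $E=k(g)\subset K$: one uses Lemma~\ref{lemm:milnor} (Milnor K-theory criterion for algebraic dependence) applied to pairs of functions $g_1,g_2$ whose divisors are the $R_c$, to see that all the $g_i$ are algebraically dependent, hence lie in a common one-dimensional subfield, which one can take normally closed. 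The $\Z_{(\ell)}$-rational structure on $l(f)^*/l^*\otimes\Z_{(\ell)}$ maps into $\left(k(g)^*/k^*\otimes\Z_{(\ell)}\right)^\epsilon$ for a single constant $\epsilon\in\Z_\ell^*$, because the various $\varepsilon_\nu$ of~\eqref{eqn:epsilon} for $\nu$ ranging over fibers of $\pi_E$ must coincide: the $\delta_{\nu,K}$-coefficients of a fixed $\Psi^*(f)$ are pinned down simultaneously, so the ambiguity is a single scalar.

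I would organize the details by first proving the existence of $g$ and the relation $\Psi^*(f)^N=g^\alpha$ (this is the analogue of the dimension-two argument in \cite[Section~11]{bt0}, now fed by Proposition~\ref{prop:finite-support} and the inductive hypothesis~\eqref{eqn:induct} applied to residue fields $\KK_\nu$), then deducing that $\alpha/N\in\Z_{(\ell)}$ when $f$ is replaced by a sufficiently divisible power (so that the formula descends to $\Z_{(\ell)}$-coefficients), and finally assembling the subfield statement from the formal-projection formalism. The main obstacle, as the authors themselves flag, is showing that a \emph{single} constant $\epsilon$ works uniformly: individually each $f$ only gives $\Psi^*(f)^N=g^\alpha$ with its own exponents, and one must check the resulting $\Z_{(\ell)}$-lattice inside $k(g)^*/k^*\otimes\Z_{(\ell)}$ is independent of the choice of representatives and of $f\in l(f)^*$. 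This is handled by observing that $\hat{l(f)}^*$ has $\Z_\ell$-rank at least two (since $\trdeg\ge 2$), so one has at least two independent functions $f_1,f_2$ with the same support decomposition, and the ratio $\varepsilon_{\nu_1}/\varepsilon_{\nu_2}$ over two fibral valuations is forced to be $1$ by comparing the residue maps $\res_{\nu_i}$ through the compatible isomorphism $\Psi^*$; the argument is then, as the introduction states, formally identical to the surface case in \cite{bt0}.
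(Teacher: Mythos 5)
Your stage 1 is in the right spirit but glosses over the step that actually makes the first claim work. After applying Proposition~\ref{prop:finite-support} and diagram~\eqref{eqn:dia-need}, one gets $\hat f^N = t_{\hat f}\cdot\prod_{i\in I}g_i^{a_i}$ with $t_{\hat f}\in\cT_\ell(K)$, the $a_i$ linearly independent over $\Z_{(\ell)}$, and the $g_i\in K^*/k^*$ multiplicatively independent. What you need at this point is \emph{not} that ``the $\cT_\ell$-component vanishes after raising to a suitable power $N$'' --- raising to a power clears denominators in $\Div^0(X)_\ell$, it does not touch $\cT_\ell$ (which is torsion-free). The $\cT_\ell$-part and, crucially, the collapse $\#I=1$ are obtained by restricting to a suitably general hyperplane section $D$, where $\xi_{D,\ell}$ is an isomorphism (Lemma~\ref{lemm:tl}) and the $g_i$ remain independent in $\KK_\nu^*/k^*$, and then invoking the inductive hypothesis~\eqref{eqn:induct} for the residue field: it forces $\res_\nu(\hat f^N)=g_\nu^{b_\nu}$, a power of a \emph{single} element, from which $\res_\nu(t_{\hat f})=1$ (hence $t_{\hat f}=1$) and $\#I=1$ both follow. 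Your proposal names the inductive hypothesis in passing but never states this forcing step, and your citation of Lemma~\ref{lemm:lef}(2) here is misplaced --- that genus bound is an ingredient of the Finiteness of Support proof, not of Proposition~\ref{prop:33}. Also, $\Pic^0(X)\{\ell\}$ is \emph{not} finite (over $\ovl\F_p$ it is $(\Q_\ell/\Z_\ell)^{2g}$); what is used in~\eqref{eqn:dia-need} is its $\ell$-divisibility.

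Your stage 2 takes a genuinely different route --- formal projections plus the Milnor $\mathrm{K}$-theory criterion --- but as stated it has a gap and conflates two results. The paper's argument is more direct and closes without that machinery: the function $g$ from stage 1 defines a map $\pi\colon X\to C$; for any $h\in l(f)^*/l^*$ the divisor $\dv_X(\Psi^*(h))$ must be $\pi$-vertical (a horizontal component $D$ would give a contradiction, since $g$ restricts nontrivially to $D$ while $f$ restricts trivially or is undefined), then Lemma~\ref{lemm:pencil} gives $\Psi^*(h)\in\widehat{k(g)}^*$, and a \emph{second} application of the inductive hypothesis, now at a divisorial valuation $\nu$ where $f$ is defined and nontrivial, produces the constant $\epsilon$. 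Your route does not explain how algebraic dependence among the various $g_i$ packages them into a single one-dimensional subfield $k(g)$, and it never invokes the inductive hypothesis in the residue field to produce $\epsilon$. Moreover, the issue you flag as ``the main obstacle'' --- a \emph{single} $\epsilon$ working across different choices of $f$ --- is the content of Proposition~\ref{prop:ccc}, which is where Lemma~\ref{lemm:milnor} is actually used, not of Proposition~\ref{prop:33} (which only asserts an $\epsilon$ depending on the subfield $F=l(f)$). You should separate those two statements cleanly.
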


Considerations in Section~\ref{sect:val} imply that 
under the assumptions of Theorem~\ref{thm:main}
we have a canonical commutative diagram, for every $\nu\in\DVal_K$: 

\

\centerline{
\xymatrix{
0 \ar[r] & \cT_{\ell}(L) \ar[d]_{\Psi^*} \ar[r] & \Ker(\nu) \ar@{=}[d]_{\Psi_{\nu}^*} \ar[r] &
{\hat{\LL}}^*_{\nu} \ar[d]^{{\Psi^*_{\nu}}} & \supset &  \LL^*_{\nu}\otimes \Z_{(\ell)} \ar[d]_{\Psi^*_{\nu}}  \\
0 \ar[r] & \cT_{\ell}(K) \ar[r]        & \Ker(\nu)  \ar[r] & 
\hat{\KK}_{\nu}^*  & \supset &  \left(\KK^*_{\nu}\otimes \Z_{(\ell)}\right)^\epsilon, 
}
}

\noindent
for some constant $\epsilon\in \Z_{\ell}^*$, depending on $\nu$.  
By \cite[Proposition 12.10]{bt0}, the left vertical map is
a canonical isomorphism. 
In both proofs (Finiteness of support and Image of $\Psi^*$)
we will apply the inductive
assumption~\eqref{eqn:induct} to residue fields of 
appropriate divisorial valuations.

\

\begin{proof}[Proof of Proposition~\ref{prop:33}]
Let $X$ be a normal projective model of $K$ and put $\hat{f}:=\Psi^*(f)$. 
By Proposition~\ref{prop:finite-support}, we may assume
that $\supp_X(\hat{f})$ is finite, i.e., 
$$
{\rm div}(\hat{f}) = \sum_{j\in J} d_jD_j,
$$
where $J$ is a finite set, $d_j\in \Z_{\ell}$ and $D_i$ are irreducible divisors on $X$. 
Then there exists an $N\in \N$ such that $\hat{f}^N\in \Div^0(X)_{\ell}\subset 
\widehat{\Div^0}(X)$.   
By \eqref{eqn:dia-need}, we have 
$$
\hat{f}^N= t_{\hat{f}}\cdot \prod_{i\in I} g_i^{a_i},
$$ 
with $I$ a finite set, $a_i\in \Z_{\ell}$
linearly independent over $\Z_{(\ell)}$, $g_i\in K^*/k^*$ multiplicatively independent,  
and $t_{\hat{f}}\in \cT_{\ell}(K)$.

The projective model $X$ contains a hyperplane section $D\subset X$
such that 
$$
\cT_{\ell}(K)=\cT_{\ell}(X)=\cT_{\ell}(D),
$$
under the natural restriction isomorphism $\xi_{D,\ell}$ from Lemma~\ref{lemm:lef},
and the restrictions of $g_i$ to $D$ are multiplicatively independent in 
$k(D)^*/k^*=\KK_{\nu}^*/k^*$, where $\nu=\nu_D$. 

By the construction and the inductive assumption,  we have
$\res_{\nu}(\hat{f}^N)= g_{\nu}^{b_{\nu}}$, where $b_{\nu}\in \Z_{\ell}$, $g_{\nu}\in \KK^*_{\nu}$:
$$
\res_{\nu}(\hat{f}^N)
=
\res_{\nu}(t_{\hat{f}})\cdot \prod_{i\in I} \res_{\nu}(g_i)^{a_i}
=g_{\nu}^{b_{\nu}}.
$$
In particular, $\res_{\nu}(t_{\hat{f}})=1$ and hence $t_{\hat{f}}=1$. 
Since $\res_{\nu}(g_i)\in \KK_{\nu}^*$ are independent, it follows that $\# I=1$ and 
$$
\hat{f}^N= g^{a}, \quad g\in K^*/k^*, \quad a\in \Z_{\ell}.  
$$
This proves the first claim.

The function $g\in K^*/k^*$ defines a map $\pi\,:\, X\ra C$
from some normal model of $K$  onto a curve,
with generically irreducible fibers. For each $h\in l(f)^*/l^*$, consider 
$\dv_X(\Psi^*(h))\subset \widehat{\Div^0}(X)$.
Then divisors in $\dv_X(\Psi^*(h))$ are $\pi$-vertical. 
Indeed, the restriction of $g$ to a $\pi$-horizontal component $D$ would 
be defined and nontrivial. On the other hand, the restriction of $f$ to $D$ is either not defined
or trivial, contradiction. 
By Lemma~\ref{lemm:pencil}, $\Psi^*(h) \in \widehat{k(C)}^*= \widehat{k(g)}^*$. 

Let $\nu=\nu_D$ be a divisorial valuation such that 
$f$ is defined and nontrivial on $D$. Then 
$$
f\in \LL^*_{\nu}/l^* \text{ and } g \in \KK^*_{\nu}/k^*,
$$
and 
$$
\hat{\LL}^*_{\nu} \supset \widehat{l(f)}^* \stackrel{\Psi^*_{\nu}}{\lra}  
\widehat{k(g)}^*\subset \hat{\KK}_{\nu}^*.
$$
By the inductive assumption, this implies that there exists a constant $\epsilon\in \Z_{\ell}^*$ 
such that 
$$
\Psi^*_{\nu}(l(f)^*/l^* \otimes \Z_{(\ell)}) \subseteq \left(k(g)^*/k^*\otimes \Z_{(\ell)} \right)^{\epsilon},
$$
(see, e.g., \cite[Proposition 13.1]{bt0}).
\end{proof}

We now prove Proposition~\ref{prop:finite-support}. 
Fix a normal projective model $Y$ of $L$. The subfield $F=l(f)$ determines
a surjective map $\pi_F\,:\, Y\ra C$  with irreducible generic fibers. 
For each $c\in C$ we have an intrinsically defined formal sum
\begin{equation}
\label{eqn:rc}
R_c=\sum_{\nu\in \DVal_{L,c}} a_{c,\nu} R_{c,\nu},  \quad a_{c,\nu}\in \N,  
\end{equation}
where $\DVal_{L,c}\subset \DVal_{L}=\DVal_K$ is the subset of divisorial valuations 
supported in the fiber over $c$, $R_{c,\nu}$ is a divisor on some model $\tilde{Y}\ra Y$ 
realizing $\nu$, and $a_{c,\nu}$ are local degrees. Note that $R_c$ do not depend 
on the model $Y$, and that $R_{c_1}$ and $R_{c_2}$ have no
common components, for $c_1\neq c_2$. Furthermore, the sets $\DVal_{L,c}$ have an intrinsic Galois-theoretic
characterization in terms of $\hat{F}^*$: these are
minimal nonempty subsets of the form
$$
\supp_K(\hat{f}_1)\cap \supp_K(\hat{f}_2), \quad f_1,f_2\in \hat{F}^*,
$$
contained in $\supp'_K(\hat{F}^*)$.

For each model $\tilde{Y}\ra Y$ we have a map
$$
R_c\mapsto R_{\tilde{Y},c}:=\sum_{\nu \,:\, D_{\nu}\in \Div(\tilde{Y})} a_{c,\nu} R_{c,\nu},
$$
the fiber over $c$. The divisor of a function $f\in F^*/l^*$ on this model can be written 
as a finite sum
$$
\dv_{\tilde{Y}}(f) = \sum n_c R_{\tilde{Y},c},\quad n_c\in \N.
$$

Given $\{ \delta_{\nu,L}\}$, each $\hat{f}\in \hat{L}^*$ defines a $\Z_{\ell}$-valued function 
on $\DVal_L$ by the Kummer-pairing from Theorem~\ref{thm:ga}
\begin{equation}
\label{eqn:hatf}
\begin{array}{ccc}
\DVal_L & \ra      & \Z_{\ell} \\
\nu     & \mapsto  &  [\delta_{\nu,L}, \hat{f}].
\end{array}
\end{equation}
Similarly, each $R_c$ defines a function on $\DVal_L$ by setting 
$$
\nu \mapsto \delta_{\nu,L} \cdot R_c = \delta_{\nu,L}(t), 
$$
where $t$ is a local parameter along $c$ if $\nu$ is supported over $c$, and 
$\nu\mapsto 0$, otherwise. 

For $\hat{f}\in\hat{F}^*\subset \hat{L}^*$ 
write
$$
\dv_C(\hat{f}) =\sum_{c\in C} b_{\hat{f},c} c, \quad b_{f,c}\in \Z_{\ell},
$$
with decreasing coefficients $b_{\hat{f},c}$. 
Then \eqref{eqn:hatf} is given by
$$
\nu\mapsto b_{\hat{f},c} a_{\nu,c}.  
$$

\

We face the following difficulty: we don't know the image $\Psi^*(F^*/l^*)$ in $\hat{K}^*$,
and in particular, we don't know that $\Psi^*(R_c)$, resp. $\Psi^*(R_{\tilde{Y},c})$, 
as functionals on $\DVal_K$, correspond to fibers of any fibration on a model $X$ of $K$. 
However, we know the ``action'' of $\Psi^*$ on the coefficients in Equation~\eqref{eqn:rc}:
$$
a_{c,\nu} \mapsto \varepsilon_{\nu}^{-1} a_{c,\nu}. 
$$

\begin{lemm}
\label{lemm:rc}
Either there is a nonconstant $f\in F^*/l^*$ such that $\supp_X(\Psi^*(f))$ is finite 
or there is at most one $c\in C$, where $C$ corresponds to $F$, 
such that $\Psi^*(R_c)$ 
has finite support on every model $X$ of $K$. 
\end{lemm}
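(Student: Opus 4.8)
The plan is to argue by contradiction: suppose that every nonconstant $f\in F^*/l^*$ has $\supp_X(\Psi^*(f))$ infinite on some (hence, by Proposition~\ref{prop:finite-support}'s statement being what we are trying to contradict, on every suitable) normal model $X$ of $K$, and suppose that there are at least two distinct points $c_1,c_2\in C$ with $\Psi^*(R_{c_1})$ and $\Psi^*(R_{c_2})$ of finite support on every model $X$. First I would exploit property (4) of the formal projection attached to $\hat F^*$ (the Example after the definition of formal projection): for the two chosen points there is an $m\in\N$ and a function $\hat f\in \hat F^*$ with $m(R_{c_1}-R_{c_2})={\rm div}_L(\hat f)$. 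Applying $\Psi^*$ and using the description of the $\Psi^*$-action on the coefficients $a_{c,\nu}\mapsto \varepsilon_\nu^{-1}a_{c,\nu}$ recorded just before the lemma, the $K$-divisor ${\rm div}_K(\Psi^*(\hat f))$ equals $m(\Psi^*(R_{c_1})-\Psi^*(R_{c_2}))$ as a functional on $\DVal_K$. Since the two terms on the right have finite support on every model $X$, so does the difference; hence $\Psi^*(\hat f)$ has finite support on every model $X$. But $\hat f\in\hat F^*$ can be chosen so that the associated $F$-divisor $\dv_C(\hat f)=\sum b_{\hat f,c}\,c$ has $b_{\hat f,c_1}\neq b_{\hat f,c_2}$ (indeed these coefficients are $m$ and $-m$), so $\hat f$ is a nonconstant element of $\hat F^*$; passing to an $\ell^n$-approximation and then to a genuine function, we may arrange $\hat f=f_0\cdot(\text{$\ell$-th powers})$ with $f_0\in F^*/l^*$ nonconstant, still of finite support. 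This contradicts the hypothesis that no nonconstant $f\in F^*/l^*$ has finite support, and the lemma follows.

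The one point that requires care — and which I expect to be the main obstacle — is the passage from ``$\Psi^*(\hat f)$ has finite support on every model $X$'' for the particular completed element $\hat f\in\hat F^*$ to ``some nonconstant \emph{honest} function $f\in F^*/l^*$ has finite support''. The subtlety is that $\hat f$ is a pro-$\ell$ element $f_0f_1^\ell f_2^{\ell^2}\cdots$, and a priori only its reductions mod $\ell^n$ are controlled; one must check that finiteness of $\supp_X$ is inherited by a suitable component. I would handle this by observing that $\supp_X(\hat f)$ is by definition the set of divisorial valuations on which $\hat f$ is nontrivial on $\I^a_\nu$, equivalently the set of $D_m\subset X$ with nonzero $\ell$-adic coefficient $\hat a_m$ in $\dv_X(\hat f)$; if this set is finite for every model $X$, then in particular the leading component $f_0$ (nontrivial since $b_{\hat f,c_1}=m\neq 0$) has finite divisor on every model, because its divisor is the ``$\ell^0$-layer'' of a finite formal sum and finiteness of $\supp_X$ is model-independent (Notation~\ref{nota:supp}). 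Thus $f_0\in F^*/l^*$ is the desired nonconstant function.

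A secondary technical check is that the identity $m(R_{c_1}-R_{c_2})={\rm div}_L(\hat f)$ from property (4), which a priori lives at the level of $\hat L^*$-divisors on $L$, transports correctly under $\Psi^*$ to an identity of $K$-divisors. This is exactly what the discussion preceding the lemma provides: $\Psi^*$ is defined on $\hat L^*$, it sends $\hat F^*$ into $\hat K^*$, and its effect on the intrinsic coefficients $a_{c,\nu}$ of $R_c$ is multiplication by $\varepsilon_\nu^{-1}$, which is a bijective rescaling of each $\DVal$-fiber and therefore preserves both finiteness of support and the relation expressing one $K$-divisor as an integral combination of others. Once these two bookkeeping points are in place, the contradiction is immediate and no further geometry is needed; everything else is the formal-projection axioms together with the already-established behaviour of $\Psi^*$ on inertia generators.
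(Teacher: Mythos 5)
You and the paper use the same core mechanism: pick two points $c_1,c_2\in C$ with $\Psi^*(R_{c_j})$ of finite support, produce an element of $\hat F^*$ whose $L$-divisor is supported only over $\{c_1,c_2\}$, and use the fact that $\Psi^*$ rescales each coefficient $a_{c,\nu}$ by a unit $\varepsilon_\nu^{-1}$ (so that the supports of $\Psi^*(R_{c_j})$ are unchanged) to conclude that the resulting element has finite $\Psi^*$-support on every model of $K$. The paper, however, takes an \emph{honest} function $f\in F^*/l^*$ with $\dv_C(f)$ supported on $\{c_1,c_2\}$ --- this exists because $C$ is a curve over $\bar{\mathbb F}_p$, where $\mathrm{Pic}^0(C)$ is torsion, so $c_1-c_2$ has finite order $m$ and one takes $f$ with $\dv_C(f)=m(c_1-c_2)$. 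That $f$ is then already the required nonconstant element of $F^*/l^*$, and no extraction step is needed. You instead invoke axiom (4) of a formal projection to get a possibly genuine pro-$\ell$ element $\hat f\in\hat F^*$, and then try to recover an honest function from it.

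That extraction step is the genuine gap, and your justification does not close it. You claim that finite $\supp_X(\Psi^*(\hat f))$ forces finite $\supp_X(\Psi^*(f_0))$ because ``its divisor is the $\ell^0$-layer of a finite formal sum.'' But $\Psi^*(f_0)\in\hat K^*$, and the coefficients of $\dv_X(\Psi^*(f_0))$ lie in $\Z_\ell$, not $\Z$ --- indeed, the finiteness of $\supp_X(\Psi^*(f_0))$ for honest $f_0\in L^*/l^*$ is precisely the content of Proposition~\ref{prop:finite-support}, which is the statement this lemma is a step towards, not something available as an input. Since $\Psi^*(\hat f)=\Psi^*(f_0)\cdot\Psi^*(f_1)^{\ell}\Psi^*(f_2)^{\ell^2}\cdots$, the coefficients of $\dv_X(\Psi^*(f_0))$ agree with those of $\dv_X(\Psi^*(\hat f))$ only modulo $\ell$; a vanishing coefficient of the latter forces the corresponding coefficient of the former into $\ell\Z_\ell$, not to zero, and infinitely many such nonzero coefficients are not excluded. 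So the $\ell^0$-layer argument controls only the reduction mod $\ell$, which gives no bound on $\supp_X(\Psi^*(f_0))$. The fix is simply to notice that the element furnished by axiom (4) may be taken to be an honest $f\in F^*/l^*$, as above, after which the lemma is immediate.
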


\begin{proof}
Let $c_1,c_2\in C$ be distinct points
such that 
$$
\supp_{X}(\Psi^*(R_c)) \cup \supp_{X}(\Psi^*(R_{c'}))
$$
is finite. Then there is a function $f$ with 
divisor supported in this set, thus finite $\supp_{X}(\Psi^*(f))$. 
\end{proof}

\begin{proof}[Proof of Proposition~\ref{prop:finite-support}]
By contradiction. Assume that $\supp_X(\Psi^*(f))$ is infinite. 
An argument as in the proof of Proposition~\ref{prop:33} shows that the same holds for
every $h\in l(f)^*/l^*$. 

Fix a Lefschetz pencil $\lambda\,:\, X\ra \P^1$ such that 
for almost all fibers $D_t$ of $\lambda$ we have a well-defined
$$
\res_{\nu}  \,:\, l(f)^*/l^* \ra \LL^*_{\nu_t} \stackrel{\Psi^*}{\lra} \widehat{\KK}^*_{\nu_t},
$$
where $\nu_t$ is the divisorial valuation corresponding to $D_t$. 
By the inductive assumption, there exist one-dimensional closed subfields
$E_t=k(C_t)\subset k(D_t)=\KK_{\nu_t}$ such that 
$$
\Psi^*(\res_{\nu_t}(l(f)^*/l^*)\otimes \Z_{(\ell)})
\subseteq \left(E_t^*\otimes \Z_{(\ell)}\right)^{\epsilon_t}, \quad \epsilon_t\in\Z_{\ell}^*.
$$
We have an induced surjective map
$$
\pi_t\,:\, D_t\ra C_t
$$
as in Lemma~\ref{lemm:subf}. Passing to a finite purely-inseparable cover of $C_t$
we may assume that $\pi_t$ is separable (this effects the constant $\epsilon$ by multiplication by a
power of $p$ which is in $\Z_{\ell}^*$). We identify the sets $C(k)$ and $C_t(k)$, set-theoretically. 

Fix a family of flexible curves $\{T_t\}$ uniformly on all but finitely many $D_t$
as in Lemma~\ref{lemm:genus-generic} and let $m$ be the bound on the genus 
of these curves obtained in this Lemma. 
Put $N:=m+4$ and choose $c_{1},\ldots, c_N \in C_t(k)=C(k)$
such that $\supp_{X}(R_{c_j})$ is infinite for all $j$, this is possible by Lemma~\ref{lemm:rc}.

For each $c_j$ express
the fiber over $c_j$ as
$$
R_{c_j}:=\sum_{e=0}^{\infty} \ell^e R_{c_j,e}, \quad
R_{c_j,e}:=\sum_{i\in I_{e,j}} \epsilon_{i,e,j} R_{i,e,j},
$$
where $I_{e,j}$ are finite, and $R_{i,e,j}$ irreducible 
divisors over $c_j$, and $\epsilon_{i,e,j}\in \Z_{\ell}^*$ (see Lemma~\ref{lemm:image-rc}). 
Let $S_{c_j,e}=\cup R_{i,e,j}$ be the support of $R_{c_j,e}$. 
Note that $T_t$ intersect all $S_{c_j,e}$ and write  $d_{j,e}:=\deg(S_{c_j,e}\cdot T_t)$
for the degree of the intersection. 
Choose $M$ such that for all $j=1,\ldots, N$ one has
\begin{equation}
\label{eqn:degree}
d_{j,0}< \sum_{e=1}^M d_{j,e},
\end{equation}
this is possible since the number of components over all $c_j$ is infinite. 
Using Lemma~\ref{lemm:ea1} choose $t$ so that the intersections
$$
R_{i,e,j,t}:= D_t \cdot R_{i,e,j}
$$
are $p$-irreducible and pairwise distinct, 
this holds for all but finitely many $t$. 
Choose a flexible curve $T_t\subset D_t$ such that 
\begin{itemize}
\item $T_t$ does not pass through the points of indeterminacy of 
$\pi_t\,:\, D_t\ra C_t$;
\item $T_t$ is not contained in any of the $R_{i,e,j,t}$;
\item $T_t$ does not pass through pairwise intersections of these divisors. 
\end{itemize}

Consider the restriction
$$
\pi_t\,:\, T_t\ra C_t.
$$
By the choice of $T_t$, the number of nonramified points over each $c_j$ is at most
$d_{j,0}$. On the other hand, the ramification index over $c_j$ is at least 
$\ell \cdot \sum_{e=1}^m d_{j,e}$. 
By the choice \eqref{eqn:degree}, combined with Hurwitz formula in Lemma~\ref{lemm:hurww}, 
we obtain that $\mathsf g(T_t)> m$, contradicting the universal bound.  
\end{proof}

\begin{prop}
\label{prop:ccc}
There exists a constant $\epsilon\in \Z_{\ell}^*$ such that
\begin{equation}
\label{eqn:eps-need}
\Psi^*(L^*/l^* \otimes \Z_{(\ell)})= (K^*/k^* \otimes \Z_{(\ell)})^{\epsilon}.
\end{equation}
\end{prop}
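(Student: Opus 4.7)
The plan is to show that the constant $\epsilon_f \in \Z_{\ell}^*/\Z_{(\ell)}^*$ produced by Proposition~\ref{prop:33} does not depend on $f \in L^*/l^*$; this follows the same template as \cite[Section~13]{bt0}. Once established, calling the common value $\epsilon$, the containment $\Psi^*(L^*/l^* \otimes \Z_{(\ell)}) \subseteq (K^*/k^* \otimes \Z_{(\ell)})^\epsilon$ is immediate, and applying the same reasoning to $\Psi^{-1}$ yields the reverse inclusion. The case of algebraically dependent $f_1, f_2 \in L^*/l^*$ is easy: both lie in a common one-dimensional, normally closed subfield $E \subset L$, and Proposition~\ref{prop:33} applied to $E$ yields a single constant, so $\epsilon_{f_1} \equiv \epsilon_{f_2} \pmod{\Z_{(\ell)}^*}$.

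The substantive case is that of algebraically independent $f_1, f_2$. Here multiplicativity of $\Psi^*$ is exploited: set $f_{12} := f_1 f_2$, which is transcendental over $l$, and apply Proposition~\ref{prop:33} to each of $f_1, f_2, f_{12}$. This produces $g_1, g_2, g_{12} \in K^*/k^*$ and scalars $\epsilon_{f_1}, \epsilon_{f_2}, \epsilon_{f_{12}} \in \Z_{\ell}^*$ for which $\Psi^*(f_i)$ coincides, modulo $\cT_\ell(K)$, with $g_i^{\epsilon_{f_i}}$, and similarly for $f_{12}$; the $\cT_\ell$-components vanish by the finite-support argument in the proof of Proposition~\ref{prop:33}. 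Multiplicativity then yields
$$
g_{12}^{\epsilon_{f_{12}}} \,=\, g_1^{\epsilon_{f_1}} \cdot g_2^{\epsilon_{f_2}}
$$
in $\widehat{\Div^0}(X)$ for a suitable normal projective model $X$ of $K$. Since $\Psi$ preserves commuting pairs and hence, by Lemma~\ref{lemm:milnor}, algebraic dependence of pairs, the functions $g_1, g_2$ are algebraically independent in $K$. Choosing $X$ so that $\dv_X(g_1)$ and $\dv_X(g_2)$ each contain a prime component not in the support of the other, and picking $D \in \supp_X(g_1) \setminus \supp_X(g_2)$, the coefficient equation at $D$ forces $\epsilon_{f_1}/\epsilon_{f_{12}} \in \Q^*$; by symmetry, $\epsilon_{f_2}/\epsilon_{f_{12}} \in \Q^*$, whence $\epsilon_{f_1} \equiv \epsilon_{f_2} \pmod{\Z_{(\ell)}^*}$.

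The principal obstacle is the separation-of-supports step invoked above: one must guarantee, possibly after replacing $f_1$ and $f_2$ by suitable elements of their one-dimensional subfields and passing to a sufficiently general normal model $X$, that $\supp_X(g_1) \not\subseteq \supp_X(g_2)$ and vice versa. Ensuring this, so that coefficient-wise comparison in $\widehat{\Div^0}(X)$ yields genuine rational ratios rather than being vacuous, is where the freedom of model choice and the Milnor-theoretic characterization of algebraic independence (Lemma~\ref{lemm:milnor}) play the decisive role, paralleling the two-dimensional argument of \cite[Section~13]{bt0}.
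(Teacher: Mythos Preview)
Your overall strategy matches the paper's: invoke Proposition~\ref{prop:33} for $f_1,f_2,f_1f_2$, use multiplicativity of $\Psi^*$, translate algebraic (in)dependence via Lemma~\ref{lemm:milnor}, and compare divisor coefficients on a model $X$. Where you diverge is in the endgame, and that is precisely where you flag an unresolved ``principal obstacle''.

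You try to find a prime $D\in\supp_X(g_1)\setminus\supp_X(g_2)$ so that a single coefficient equation forces $\epsilon_{f_1}/\epsilon_{f_{12}}\in\Q$. The paper avoids this entirely. It only uses that $\dv_X(g_1)$ and $\dv_X(g_2)$ are not \emph{proportional} (equivalently, $g_1,g_2$ are not powers of a common element in $K^*/k^*$), which is what actually follows from multiplicative independence of $f_1,f_2$ and injectivity of $\Psi^*$. Writing
\[
g_{12}=g_1^{\,x}\,g_2^{\,y},\qquad x=\alpha_1/(\alpha_{12}N_1),\quad y=\alpha_2/(\alpha_{12}N_2),
\]
one gets, for every $\nu\in\DVal_K$, the integer relation
\[
b_{12}(\nu)=b_1(\nu)\,x+b_2(\nu)\,y,\qquad b_j(\nu)=[\delta_{\nu,K},g_j]\in\Z.
\]
Non-proportionality of $\dv_X(g_1),\dv_X(g_2)$ means the matrix $\bigl(b_1(\nu),b_2(\nu)\bigr)_\nu$ has rank~$2$ over~$\Z$, so the $\Z_\ell$-scalars $x,y$ are solved for in $\Q$. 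No separation of supports is needed, and the obstacle you identify simply does not arise.

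A minor point: your treatment of the algebraically dependent case appeals to a common normally closed one-dimensional subfield $E\subset L$, but Proposition~\ref{prop:33} is stated only for rational subfields $l(f)$, so this does not immediately yield a single $\epsilon$ on all of $E^*$. The paper does not split into dependent/independent cases; the rank argument above handles any multiplicatively independent pair $f_1,f_2$ uniformly, and multiplicatively dependent pairs are trivial.
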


\begin{proof}
By Proposition~\ref{prop:33}, for each 
one-dimensional subfield $F=l(f)\subset L$ 
there exists a one-dimensional subfield $E=k(g)$ and a constant
$\epsilon_F \in \Z_{\ell}^*$ such that 
$$
\Psi^*(F^*/l^* \otimes \Z_{(\ell)})\subseteq (E^*/k^* \otimes \Z_{(\ell)})^{\epsilon_F}.
$$
Moreover, for $f_1,f_2\in L^*/l^*$ we have $\Psi^*(f_j)^{N_j}=g_j^{\epsilon_j}$, 
for some $N_j\in \N$ and  $\epsilon_j\in \Z_{\ell}$. 
It follows that the symbol $(f_1,f_2)$ is infinitely divisible in ${\rm K}^2_M(L)$ if and only if 
$(g_1,g_2)$ is infinitely divisible in  ${\rm K}^2_M(K)$. 
Thus $f_1,f_2$ are algebraically dependent if and only if $g_1,g_2$ are algebraically dependent, 
see Lemma~\ref{lemm:milnor}. 
In particular, if $f_1,f_2$ are not powers of the same element in $L^*$ 
the same holds for $g_1,g_2$, i.e., the  divisors of $g_1,g_2$, on any model $X$ of $K$, are not proportional.
We have
$$
\Psi^*(f_1)^{N_1} =g_1^{\alpha_1}, \quad \Psi^*(f_2)^{N_2} =g_2^{\alpha_2}, \quad \text{ and } \quad
\Psi^*(f_1f_2)^{N_{12}} =  g_{12}^{\alpha_{12}}.
$$ 
We need to show that $\alpha_1,\alpha_2, \alpha_{12}$ span a 1-dimensional lattice, modulo $\Z_{(\ell)}$. 
We have
$$
g_{12}= g_1^{\alpha_1/\alpha_{12} N_1} g_2^{\alpha_2/\alpha_{12} N_2}, \quad \text{ for some }  N_{1},{N_2}\in \Z_{(\ell)}. 
$$
In particular, for any divisorial valuation $\nu$ in the support of $g_{12}$ 
the integral coefficient $b_{12}(\nu)=[\delta_{\nu,K},g_{12}]\in \Z$ equals
$$
b_1(\nu)\alpha_1/\alpha_{12} N_1 + b_2(\nu) \alpha_2/\alpha_{12} N_2, \quad \text{ for some } \quad b_1(\nu),b_2(\nu)\in \Z.
$$
Since the divisors of $g_1,g_2$ are not proportional
the rank of the corresponding system of equations, as we vary over $\nu$,
is at at least $2$.
Hence  both $\alpha_1/\alpha_{12} N_1, \alpha_2/\alpha_{12} N_2$ are rational, as claimed.  
\end{proof}

\

\section{Proof}
\label{sect:proof}

In this section we prove our main theorem.

\

{\em Step 1.} We have a nondegenerate pairing 
$$
\G_K^a\times \hat{K}^*\ra \Z_{\ell}(1).   
$$
This implies a canonical isomorphism
$$
\Psi^*\,:\, \hat{L}^*\ra \hat{K}^*.
$$ 

\

{\em Step 2.}
By assumption, $\Psi\,:\, \G^a_K\ra \G^a_L$ 
is bijective on the set of liftable subgroups, in 
particular, it maps liftable subgroups $\sigma\in \Sigma_K$ 
to a liftable subgroups of the same rank. 
In Section~\ref{sect:val} we identify intrinsically 
the inertia and decomposition groups of divisorial valuations:
$$
\I^a_\nu\subset\D^a_\nu\subset \G^a_K:
$$ 
every liftable subgroup
$\sigma\in \Sigma_K$ contains an inertia element
of a divisorial valuation 
(which is also contained in at least one other $\sigma'\in \Sigma_K$). The
corresponding decomposition group is the ``centralizer'' of the 
(topologically) cyclic inertia group 
(the set of all elements which ``commute'' with inertia). 
This identifies $\DVal_K=\DVal_L$. 

\

By \cite[Section 17, Step 7 and 8]{bt0}, an isomorphism
$$
\Psi^* \,:\, \G^a_K\ra \G^a_L
$$
of abelianized Galois group of 
function fields $K=k(X)$ and $L=l(Y)$ 
of surfaces over algebraic closures of 
finite fields of characteristic $\neq \ell$
implies the existence of a constant 
$\epsilon\in \Z_{\ell}^*$ and a canonical isomorphism
$$
L^*/l^*\otimes \Z_{\ell}\supset \cup_{n\in \N} (L^*/l^*)^{1/p^n}\simeq  
\cup_{n\in \N}(K^*/k^*)^{\epsilon/p^n} \subset K^*/k^*\otimes \Z_{\ell}.
$$
By the induction hypothesis, we may assume that this holds in dimension $\le n-1$: 
Once we have identified decomposition and inertia subgroups of divisorial valuations, 
we have, for each $\nu\in \DVal_K$,  an intrinsically defined sublattice
$$
\Psi^*(\LL^*_{\nu}/l^*)=(\KK_{\nu}^*/k^*)^{\epsilon}\subset \hat{\KK}_{\nu}^*
$$ 
of elements of the form $f^{\epsilon}$, with $f\in \KK_{\nu}^*/k^*$ and $\epsilon\in \Z_{\ell}^*$
in the completion of the residue field. 
Using Proposition~\ref{prop:ccc}, we prove that the same holds for the image of $L^*/l^*$ in 
$\hat{K}^*$.

\

Thus we obtained an isomorphism
$$
\epsilon^{-1}\cdot \Psi^* \,: \,  L^*/l^*\otimes \Z_{(\ell)} \ra K^*/k^*  \otimes \Z_{(\ell)}
$$
which maps multiplicative groups of one-dimensional
subfields of $L$ into multiplicative groups of 
one-dimensional subfields of $K$. 
The same holds for multiplicative groups of 
subfields of transcendence degree two (see Proposition~\ref{prop:ccc} and its proof; 
Lemma~\ref{lemm:milnor} allows to characterize algebraically independent elements).
To conclude the proof it suffices to apply the inductive hypothesis, the case of surfaces: 
in \cite{bt0} we showed that the additive structure is canonically encoded in these data.

\

\bibliographystyle{smfplain}
\bibliography{recon}

\end{document}